\newtheorem{theorem}{Theorem}[section]
\newtheorem{proposition}{Proposition}[section]
\newtheorem{definition}{Definition}[section]
\newtheorem{lemma}{Lemma}[section]
\date{\today}
\title[Discrete representations for flow topology]{Discrete representations of orbit structures  of flows for topological data analysis}
\author[Takashi Sakajo and Tomoo Yokoyama]{}
\subjclass{37F20, 37C10, 05C62, 37N10, 58K45, 76F20.}
 \keywords{Topology; Combinatorics; Graphs; Vector fields on surfaces; Topological data analysis.}
 \email{sakajo@math.kyoto-u.ac.jp}
 \email{tomoo@gifu-u.ac.jp}
\thanks{T.S. is supported by JST-Mirai Program Grant No. JPMJMI18G3 and JPMJMI22G1. T.Y. is partially supported by JSPS Kakenhi(C) (\#20K03583). This work was partially supported by a grant from the Simons Foundation. T.S. would like to thank the Isaac Newton Institute for Mathematical Sciences for support and hospitality during the programme [CAT] when work on this paper was undertaken. This work was supported by: EPSRC grant number EP/R014604/1. On behalf of all authors, the corresponding author states that there is no conflict of interest.}
\newcommand{\B}{\Box}
\begin{document}
\maketitle

\centerline{\scshape Takashi Sakajo}
\medskip
{\footnotesize
 \centerline{Department of Math, Kyoto University, Kyoto 606-8602, Japan}
} 

\medskip

\centerline{\scshape Tomoo Yokoyama}
\medskip
{\footnotesize
 \centerline{Applied Mathematics and Physics Division, Gifu University, Yanagido 1-1, Gifu, 501-1193, Japan}
}

\bigskip

\begin{abstract}
This paper shows that the topological structures of particle orbits generated by a generic class of vector fields on spherical surfaces, called {\it the flow of finite type},  are in one-to-one correspondence with discrete structures such as trees/graphs and sequence of letters.  The flow of finite type is an extension of structurally stable Hamiltonian vector fields, which appear in many theoretical and numerical investigations of 2D incompressible fluid flows. Moreover, it contains compressible 2D vector fields such as the Morse--Smale vector fields and the projection of 3D vector fields onto 2D sections. The discrete representation is not only a simple symbolic identifier for the topological structure of complex flows, but it also gives rise to a new methodology of topological data analysis for flows when applied to data brought by measurements, experiments, and numerical simulations of complex flows. As a proof of concept, we provide some applications of the representation theory to 2D  compressible vector fields and a 3D vector field arising in an industrial problem.
 \end{abstract}

\section{Introduction}
{Fluid dynamics is one of the important fields of science and technology in the modern era. Large-scale numerical simulations of process-driven PDE models such as the Navier-Stokes equations 
have been commonly utilized to understand complex flow phenomena, playing a significant role in the developments of modern infrastructures such as cars, high-speed trains, airplanes, 
and wind turbine generators. On the other hand, owing to the recent improvements in observation and measurement technologies, data-driven approaches such as machine learning and 
data analysis are also becoming another powerful tool to extract useful information from complex flow phenomena. Whichever approach we may take, with the explosion of data size 
brought by these numerical simulations and observation/measurements, it is strongly desired to develop an efficient way to describe and extract latent knowledge hidden behind 
complex flow phenomena and to make better predictions from those big flow data. One approach to responding to the requirements is to identify topological features 
of fluid patterns in terms of local orbit structures such as singular orbits, periodic orbits, and connecting orbits between them. This is because,  in many experiments/measurements of 
complex flow phenomena and numerical simulations of flow equations, we often visualize the orbits of particles advected by the vector field to understand the flow structures.
On the other hand, since it is too complicated to extract such topological structures of three-dimensional flows in general, we restrict our attention to flow patterns in 2D spaces or 
we consider the projection of 3D flows on 2D spaces for convenience.}

One of the most important classes of 2D flows is divergence-free vector fields on a two-dimensional domain without a handle. Any divergence-free vector field is then generated 
by a smooth scalar function, called {\it Hamiltonian}. When a passive particle is advected by a $C^r$ ($r\geq 1$) divergence-free vector field,
the orbit of the passive scalar corresponds to a level curve of the Hamiltonian. 
Hence, to realize the topological classification of flow patterns, it is sufficient to pay attention to the topological structures of these level curves consisting of singular orbits and connecting orbits like Morse theory.
 By analogy in the flow of incompressible fluids on planes, the Hamiltonian is called the stream function,  and its level curves are referred to as streamlines.
 
 There are some preceding results on the topological classification of orbit structures for incompressible and irrotational flows with a finite number of point vortices~\cite{ArBr98,KiNe00,Moffatt01}. It is generalized for integrable Hamiltonian dynamical systems with one degree of freedom on closed surfaces~\cite{BoFo04} {and unbounded surfaces \cite{Nikolaenko20,Y21}}. 
Ma and Wang~\cite{MaWa05} have provided a classification of streamline topology of {\it structurally stable} Hamiltonian flows. 
 Here, by a structurally stable Hamiltonian flow, we mean a Hamiltonian flow whose topological structure remains unchanged under any infinitely small
smooth perturbations in the set of Hamiltonian flows. The theory has been extended to the structurally stable Hamiltonian flows in the presence of a uniform 
flow and solid boundaries~\cite{YS12}.  
Later, it is found that the topological structure of particle orbits is in one-to-one correspondence with labeled directed rooted trees~\cite{SY18} and symbolic expressions called
{\it partially Cyclically Ordered rooted Tree (COT) representations}~\cite{UYS18}.  In addition, a comparison of words and trees enables us to identify a marginal structurally unstable Hamiltonian vector field between two given structurally stable Hamiltonian vector fields and to determine all possible transitions among streamline topology~\cite{SY15}.
{Using the COT representations, one of the authors has recently constructed the complete graphs describing all transitions among topological patterns of structurally 
stable Hamiltonian vector fields having three and four genus elements~\cite{YY21}.}

 The discrete tree expressions and the symbolic representations of 2D Hamiltonian flow topology are not just discrete identifies of flow topology, but they are successfully utilized to reveal the relation 
 between topological orbit structures with the functionality realized by the flows.  For instance, the numerical computation of a flow around an inclined flat plate in a uniform 
 flow \cite{SSY14}  demonstrated that, whenever the lift-to-drag ratio acting on the plate attains local maxima, the symbolic representations for the instantaneous streamline 
 topology share a common sequence of letters and we find that the common sequence symbolizes the existence of an entrapped vortex structure above the plate. 
 This indicates the common letters encode the existence of a lift-enhancing vortex structure. Recently, computer software converting the Hamiltonian values on grid points into
  its COT representation has been developed.  {Applying the software to a large number of datasets in meteorology and oceanography, we successfully identify the occurrence of abnormal geophysical
  events such as atmospheric blocking events in the northern hemisphere~\cite{USIK20} and the large meandering of Kuroshio current along the south coast of Japan island~\cite{SOU22}.
  It thus brings a new methodology of data analysis -- what we call ``topological flow data analysis(TFDA)''.}

{The theory of the topological classification of Hamiltonian vector fields can be extended in various ways. Theoretically, hyperbolic vector fields are a significant class of 2D vector fields.
 Andronov-Pontryagin~\cite{andronov1937rough} and Peixoto~\cite{Peixoto} have revealed that 
the set of Morse-Smale $C^r$-vector fields ($r \geq 1$) on an orientable closed surface is open dense and structurally stable in the set of $C^r$-vector fields.
Morse-Smale flows on closed surfaces are classified and the complete invariant for them has been constructed~\cite{BN97,nikolaev2001foliations,OS,wang1990c}.
 They are then generalized into $\Omega$-stable flows on closed surfaces, which are Morse-Smale flows with heteroclinic orbits~\cite{KMP}, and Morse flows on compact 
 surfaces~\cite{Prishlyak2020}.  In~\cite{KMP2018}, a polynomial-time algorithm was provided for recognizing the equivalence and conjugacy 
 class of a Morse-Smale flow, called a gradient-like flow,  on a closed surface. }
 
{Another important class of 2D vector fields is {\it the flow of finite type} that was introduced in~\cite{Y}, whose definition is given in Section~3. This class is of practical significance since it
 contains 2D compressible vector fields and 2D flow fields obtained by projecting or restricting 3D vector fields on certain 2D surfaces.  Hence, the discrete tree representations
  and the COT representations for the flow of finite type certainly become useful tools for topological identification of flow patterns obtained by measurements and numerical simulations 
  for complicated 3D flows.

The purpose of this study is to extend topological classification theory to the flows of finite type and to provide discrete tree expressions expressed by COT representations for them.}
The paper consists of six sections. Recalling some mathematical definitions from the theory of dynamical systems in Section~2, we define the flow of finite type, and we
 then introduce its topological structure theorem in Section~3.  In Section~4, after describing all local orbit structures associated with COT symbols, we show that the topological orbit
  structures generated by the flows of finite type are in one-to-one correspondence with a COT representation as a rooted tree and linking structure as surface graphs, which are new 
discrete expressions for the flow topology. In Section~5, we provide some applications to  2D compressible flows and applications to 3D vector fields appearing in industrial problems.  
In {Section~6 and} the last section, a summary and future works are described. 

\section{Mathematical preliminaries: components in surface flows} \label{2-1}
 By a surface $S$, we mean a compact two-dimensional manifold, which is homeomorphic to a spherical surface $\mathbb{S}^2$ with open punctured disks. 
 That is to say, we allow the existence of
 several solid boundaries on the surface. An unbounded plane with a point at infinity is topologically identified with a surface $S$ by the stereographic projection.
  We here consider a continuous $\mathbb{R}$-action on 
 the surface $S$ denoted by  $v: \mathbb{R} \times S \rightarrow S$,
 called a flow on $S$. Then, for any $t \in \mathbb{R}$, we define a map $v_t:S \rightarrow S$ by
 $v_t:=v(t, \cdot )$. An orbit starting from $x \in S$ is then given by 
 \[
 O(x):=\{ v_t(x) \in S\, \vert \, t \in \mathbb{R}\}.
 \]
  For a $C^r$ ($r\geq 1$) vector field $u(x)$, the orbit is defined by the solution $\varphi(t;x)$ of the initial value problem $\dot{\varphi} = u(\varphi(t)), \varphi(0)=x \in S$
   as $v_t :=\varphi(t; \cdot)$.
 Since the solution exists uniquely owing to the smoothness of the vector field, 
 the flow on $S$ is well-defined. An orbit is said to be {\it proper} if there is a neighborhood of it where the orbit becomes a  closed set. According to~\cite{HH,Mai43}, any orbit on a 
 surface $S\subseteq \mathbb{S}^2$ is proper. To classify the proper orbits on surfaces, we introduce some special orbits in what follows.
 
 \begin{definition}
 A point $x \in S$ is a {\it singular} orbit, if $x=v_t(x)$ for any $t \in \mathbb{R}$, \rm{i.e.} $O(x)=\{x\}$.
 An orbit $O(x)$ is said to 
 be {\it periodic}, if there is a positive real $T>0$ such that $x=v_T(x)$ and $x \neq v_t(x)$ for any $t \in (0,T)$. 
 We say an orbit is {\it non-closed}, if it is neither singular nor periodic. 
  \end{definition}
Let  $\mathrm{Sing}(v)$, $\mathrm{Per}(v)$ and $\mathrm{P}(v)$ denote the sets of singular orbits,  the union of periodic orbits  and 
non-closed orbits respectively. The union of proper obits,  say $\mathrm{Pr}(v)$, is then decomposed into $\mathrm{Pr}(v) =  \mathrm{Sing}(v) \sqcup \mathrm{Per}(v) \sqcup \mathrm{P}(v)$ by definition. Moreover, let us recall a notion in the theory of dynamical systems. 
 \begin{definition}
 The $\omega$-limit set of $x \in S$ is defined by  $\omega(x):=\cap_{n \in \mathbb{R}}\overline{\{v_t(x)\, \vert \, t>n\}}$, where
 $\overline{A}$ denotes the closure of a subset $A$. Similarly, one defines the $\alpha$-limit set of $x \in S$ by 
 $\alpha(x):=\cap_{n \in \mathbb{R}}\overline{\{v_t(x)\, \vert \, t<n\}}$.
 A {\it separatrix} is an orbit whose $\alpha$-limit or $\omega$-limit set is a singular orbit.
 A separatrix $\gamma$ is said to be {\it connected} if both $\omega(\gamma)$ and $\alpha(\gamma)$ are singular orbits.
 \end{definition}
We first define point flow components in $\mathrm{Sing}(v)$. See Figure~\ref{Fig01:PFC}.
\begin{definition}(Point flow components) Let $x \in \mathrm{int}S$ be an isolated singular orbit.
\begin{itemize}
\item $x$ is called a {\it sink} (resp. a {\it source}), if there is a neighborhood $U$ of $x$ such that $\omega(y)=\{ x\}$  (resp. $\alpha(y)=\{ x\}$) for any $y \in U$,
\item $x$ is called a {\it center}, if there is a neighborhood $U$ of $x$ such that $U-\{ x\}$ is filled with periodic orbits,
\item $x$ is called a {\it saddle}, if it has exactly four separatrices. 
\end{itemize}
For a singular orbit on a solid boundary $x \in \partial S$,
\begin{itemize}
\item $x$ is called a {\it $\partial$-sink} (resp. a {\it $\partial$-source}), if there  is a neighborhood $U$ of $x$ such that $\omega(y)=\{ x\}$ (resp. $\alpha(y)=\{x\}$) for any $y \in U$, 
\item $x$ is called a {\it $\partial$-saddle} if it has exactly three separatrices.
\end{itemize}
\end{definition}

\begin{figure}
\begin{center}
\includegraphics[scale=0.45]{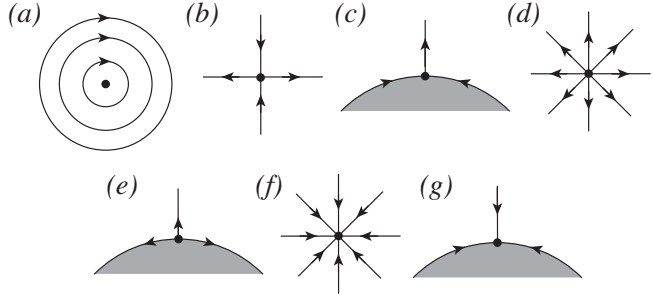}
\end{center}
\caption{Point components. (a) Center, (b) saddle, (c) $\partial$-saddle, (d) source, (e) $\partial$-source, (f) sink, (g) $\partial$-sink. }
\label{Fig01:PFC}
\end{figure}

Next, we introduce one-dimensional flow components.  
A subset is invariant if it is a union of orbits. 
For a mapping $f\colon \mathbb{S}^1 \rightarrow S$, if the image of a circle is invariant and is an isolated point, it is called a {\it trivial} circuit. On the other hand, if $f$ is an immersion and its image is invariant, then we call it a {\it non-trivial} circuit. A trivial circuit is an isolated singular orbit in $\mathrm{Sing}(v)$, while a non-trivial circuit represents either a {\it cycle}, which is a periodic orbit in $\mathrm{Per}(v)$ as in Figure~\ref{Fig02:LC}(a), or a one-dimensional 
 connected subset composed of singular orbits in $\mathrm{Sing}(v)$ and separatrices in $\mathrm{P}(v)$ between them as shown in Figure~\ref{Fig02:LC}(b).
 Now we define limit circuits as follows.
   \begin{definition}(Limit circuit)
 A {\it limit circuit} $\gamma$ is a non-trivial circuit which satisfies
 either $\alpha(x)=\gamma$ or $\omega(x)=\gamma$ for some point $x \notin \gamma$.
 \end{definition}
  For a limit circuit $\gamma$, $W^s(\gamma)$ (resp. $W^u(\gamma)$) denotes the stable (resp. the unstable) manifold of $\gamma$.   A limit circuit $\gamma$ is called {\it attracting} (resp. {\it repelling}), if there is an open annulus $\mathbb{A}$ such that one boundary of $\mathbb{A}$ is $\gamma$ and $\mathbb{A} \subseteq W^s(\gamma)$ (resp. $\mathbb{A} \subseteq W^u(\gamma)$). Then the annulus $\mathbb{A}$ is called an {\it attracting} (resp. a {\it repelling}) collar of $\gamma$. Hence, by definition, any limit circuit should have 
 at least one attracting or repelling collar. We call ($\partial$-)sinks and attracting limit circuits {\it sink structures}, while ($\partial$-)sources and repelling limit circuits are referred to as
 {\it source structures}. The following one-dimensional flow components belong to $\mathrm{P}(v)$.
  \begin{definition}(Saddle separatrix, $\partial$-saddle separatrix and ss-separatrix) A {\it saddle separatrix} is a connecting orbit whose $\alpha$-limit
 and $\omega$-limit sets are either a saddle or a $\partial$-saddle. When a saddle separatrix is connecting
 the same saddle, it is called {\it self-connected}. A saddle separatrix between
 two $\partial$-saddles on the same boundary is  referred to as a {\it self-connected $\partial$-saddle  separatrix}. 
 \end{definition}
 \begin{definition}(ss-component and ss-separatrix) An {\it ss-component} is either a ($\partial$-)sink, a ($\partial$-)source, or a limit circuit. 
 A separatrix connecting between a ($\partial$-)saddle and an ss-component is called an {\it ss-separatrix}.
 \end{definition}
  Using these flow components, we define a ``skeleton'' of orbit structures, called the {\it ss-saddle connection diagram}. 
 \begin{definition}(ss-saddle connection diagram) For a given flow $v$ on a surface, the union of saddles, $\partial$-saddles, saddle separatrices, ss-components 
 and ss-saddle separatrices is called the ss-saddle connection diagram, which is denoted by $D_{ss}(v)$.
 \end{definition}

 \begin{figure}
\begin{center}
\includegraphics[scale=0.38]{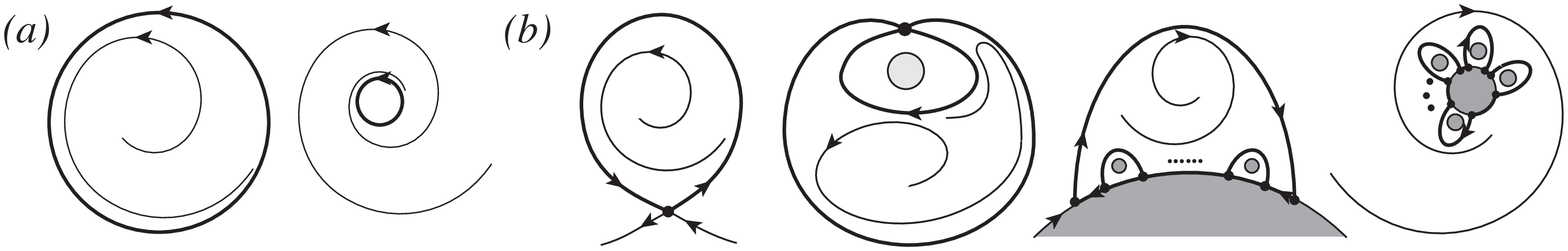}
\end{center}
\caption{Examples of non-trivial limit circuits in (a) $\mathrm{Per}(v)$ and (b) $\mathrm{P}(v)\sqcup \mathrm{Sing}(v)$. They are shown in thick black lines.}
\label{Fig02:LC}
\end{figure}

Suppose that a saddle $x$ has
 four separatrices $\gamma_1$, $\gamma_2$, $\gamma_3$, $\gamma_4$ with $\alpha(\gamma_1)=\alpha(\gamma_3)=\omega(\gamma_2)=\omega(\gamma_4)=x$ as in
 Figure~\ref{Fig05:SS}(a,b). We then introduce  special orbit structures, called slidable ($\partial$-)saddles. 
\begin{definition}(Slidable saddle) 
The saddle $x$ is called {\it slidable}, if either $\omega(\gamma_1)$ is a sink which is not $\omega(\gamma_3)$ in Figure~\ref{Fig05:SS}(a), or
 $\alpha(\gamma_2)$ is a source which is not $\alpha(\gamma_4)$ in Figure~\ref{Fig05:SS}(b). 
 \end{definition}
Similarly, let $\gamma \in \mathrm{int}S$ and $\mu \in \partial S$ denote two separatrices of a $\partial$-saddle with $\omega({\gamma})=x$ and $\alpha({\mu})=x$
as in Figure~\ref{Fig05:SS}(c). Then we have the following definition. 
\begin{definition}(Slidable $\partial$-saddle) 
A  $\partial$-saddle $x \in \partial S$ is said to be slidable, if 
there exists another 
$\partial$-saddle $y$ on the same boundary such that $\omega(\mu) =y$ as in Figure~\ref{Fig05:SS}(c) or (d).
\end{definition}  

\begin{figure}
\begin{center}
\includegraphics[scale=0.6]{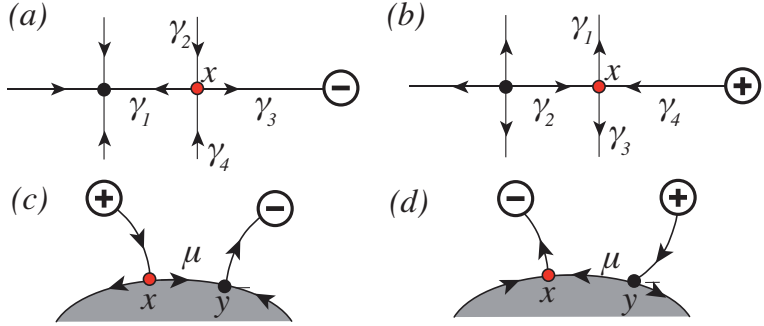}
\end{center}
\caption{(a,b) A point $x$ is a slidable saddle. (c,d) A point $x$ is a slidable $\partial$-saddle and $y$ is its associated $\partial$-saddle.
 The symbol $\oplus$ (resp. $\ominus$) indicates that the separatrix connecting to $y$ is coming from a source (resp. a sink) structure whose divergence is positive
  (resp. negative).} 
\label{Fig05:SS}
\end{figure}

\section{Flow of finite type and the topological structure theorem}
 We consider a class of 2D vector fields, which was introduced in \cite{Y}.
\begin{definition}
A flow on the surface is called {\it a flow of (strongly) finite type}, if the following are satisfied.
\begin{description}
\item[(1)] All orbits generated by the flow are proper;
\item[(2)] All singular orbits  are non-degenerate;
\item[(3)] The number of limit cycles is finite;
\item[(4)] All saddle separatrices are self-connected.
\end{description}
\end{definition}
 The first condition is satisfied because the surface is contained in a sphere as mentioned in Section~\ref{2-1}.
 The second condition shows that all singular orbits are isolated and finite. 
  The third condition is necessary to extract a finite number of orbit structures from the flows since the existence of infinitely many limit cycles implies uncountably
  many local topological structures. 
 Regarding the fourth condition, since it has been shown in~\cite{YS12} that non-self-connected separatrices of Hamiltonian vector fields are structurally unstable, we exclude the existence of such orbits so that this class becomes an extension of structurally stable Hamiltonian vector fields.  
Though a flow satisfying conditions $(1)$--$(3)$ in the definition is called a flow of finite type in \cite{Y},   we, therefore, assume the generic condition $(4)$. 
 
 Let us here recall that a $C^r$  ($r \geq 1$) vector field satisfying three generic conditions for differentials to guarantee structural stability on a surface $S$ is called {\it Morse--Smale}~\cite{LaPa90} if it satisfies the following four conditions.
(1) Each singular orbit (orbit) is hyperbolic (i.e. a ($\partial$-)sink, a ($\partial$-)source, or a ($\partial$-)saddle);
(2) Each periodic orbit is a hyperbolic limit cycle;
(3) Each saddle separatrix is contained in $\partial S$;
(4) The $\omega$-limit ($\alpha$-limit) set of a point is a closed orbit. 
Morse--Smale flows without periodic orbits are just called {\it Morse flows}.
One can then show that every Morse--Smale flow (i.e. a flow generated by a Morse--Smale vector field) is also of finite type: Since the $\omega$-limit ($\alpha$-limit) set of 
a point is a closed orbit, each orbit is proper.  Hyperbolicity of singular orbits implies that each singular orbit is non-degenerate. 
Since each saddle separatrix is contained in $\partial S$, it is self-connected and there 
are no non-periodic limit circuits. Hyperbolicity of limit cycles implies the existence of a finite number of limit circuits. 
 
 The flow of finite type is thus a generalized class of vector fields containing structurally stable Hamiltonian flows as well as Morse--Smale flows, which are 
 of mathematical significance class of vector fields~\cite{KMP,MaWa05,OS,Peixoto,YS12}.
 In addition, from the application point of view, experimental/measurement data subject to noises and errors and a 2D projection of 
3D incompressible flows admit a finite number of local orbit structures belonging to the flow of finite type.
  
   \begin{figure}
\begin{center}
\includegraphics[scale=0.6]{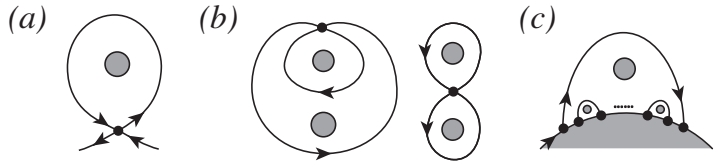}
\end{center}
\caption{Examples of non-trivial limit circuits generated by the flow of finite type. (a) A self-connected saddle separatrix. 
(b) Two self-connected saddle separatrices. (c) Self-connected $\partial$-saddle separatrices.} 
\label{Fig08:NLC}
\end{figure}

  Since a saddle has four separatrices, non-trivial limit circuits of the flow of finite type must be either a saddle with
   one self-connected saddle separatrix as Figure~\ref{Fig08:NLC}(a), or a saddle with two self-connected saddle separatrices
    as Figure~\ref{Fig08:NLC}(b). Then the limit circuit is given by the union of saddles and their connecting self-connected separatrices. As for
    $\partial$-saddles with three separatrices, a non-trivial circuit is defined by the union of one self-connected $\partial$-saddle separatrix on a boundary
    and its enclosing self-connected $\partial$-saddle separatrices attached to the same boundary.
    See Figure~\ref{Fig08:NLC}(c). Moreover, we have the following lemma regarding the $\omega$-limit and $\alpha$-limit sets.
    \begin{lemma}\label{lem51} 
The $\omega$-limit set $($resp. the $\alpha$-limit set$)$ of a non-closed orbit is one of the following: (1) a saddle, (2) a $\partial$-saddle, (3) a sink, (4) a source,
  (5) a $\partial$-sink, (6) a $\partial$-source, (7) an attracting $($resp. a repelling$)$ limit cycle (8) an attracting $($resp. a repelling$)$ non-trivial limit circuit.
\end{lemma}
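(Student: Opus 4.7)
The plan is to invoke the Poincar\'e--Bendixson trichotomy and then identify the possibilities allowed by the finite-type conditions. Since $S$ is a compact subset of $\mathbb{S}^2$, it is a planar surface (a sphere with finitely many open disks removed), and every orbit is bounded. Condition~(1) says that every orbit is proper, and condition~(2) ensures that all singular orbits are isolated, so the classical Poincar\'e--Bendixson theorem applies in the version suitable for compact planar regions with invariant boundary. Consequently, for any non-closed orbit $O(x)$, the set $\omega(x)$ is a nonempty, compact, connected, invariant subset of $S$ falling into exactly one of three cases: (i) $\omega(x)$ is a single singular orbit; (ii) $\omega(x)$ is a single periodic orbit; (iii) $\omega(x)$ is a connected union of singular orbits together with connecting orbits between them.

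In case~(i), condition~(2) forces the singular orbit to be one of the non-degenerate point flow components of Section~2. A center cannot arise since orbits near a center are periodic, so the possibilities are exactly cases (1)--(6). In case~(ii), the periodic orbit $\gamma = \omega(x)$ is approached by the non-closed orbit $O(x)$ from some annular neighborhood, producing an attracting collar, so by definition $\gamma$ is an attracting limit cycle, giving case~(7).

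The main work is case~(iii). Every singular orbit lying in $\omega(x)$ must admit both an incoming and an outgoing connecting orbit contained in $\omega(x)$, which immediately eliminates sinks, sources, $\partial$-sinks, and $\partial$-sources as nodes of the circuit; only saddles and $\partial$-saddles can appear. Each connecting orbit is therefore a saddle separatrix, and by condition~(4) it is self-connected, i.e., either a homoclinic loop at an interior saddle or a separatrix between two $\partial$-saddles on the same boundary. Examining how the four separatrices at an interior saddle or the three separatrices at a $\partial$-saddle can be assembled into a connected invariant circuit shows that the only possible shapes are the three configurations displayed in Figure~\ref{Fig08:NLC}. Since $x\notin\omega(x)$, the resulting non-trivial circuit is by definition a limit circuit, and the approach of $O(x)$ yields an attracting collar, so it is attracting; this gives case~(8). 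The $\alpha$-limit statement follows by time reversal. The main obstacle is the combinatorial analysis in case~(iii): one must rule out graphs containing nodes other than ($\partial$-)saddles and verify that self-connectedness of every saddle separatrix pins the topology of the circuit down to one of those in Figure~\ref{Fig08:NLC}, with extra care required for chains of self-connected $\partial$-saddle separatrices strung along a single boundary component.
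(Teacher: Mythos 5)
Your proposal is correct and follows essentially the same route as the paper: a Poincar\'e--Bendixson classification of the limit set, followed by the observation that non-degeneracy restricts the singular orbits to the point flow components of Section~2 (with centers excluded because their neighborhoods are filled with periodic orbits). The one substantive difference is in which version of Poincar\'e--Bendixson is invoked. The paper cites the generalized theorem of Nikolaev--Zhuzhoma (Theorem 2.6.1 of \cite{NZ}), whose trichotomy is already phrased as \emph{singular orbit / attracting (resp.\ repelling) limit circuit / quasi-minimal set}; the quasi-minimal case is then killed in one line because every orbit on a subsurface of $\mathbb{S}^2$ is proper. You instead use the classical planar trichotomy, whose third alternative is only ``a connected union of singular orbits and connecting orbits,'' so the burden falls on you to show that this graphic is in fact an immersed circle approached from one side, i.e.\ an attracting non-trivial limit circuit -- which is precisely the content you defer to the ``main obstacle'' paragraph. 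That step (one-sided accessibility of the graphic, existence of an attracting collar rather than a single approaching orbit) is exactly what the cited generalized theorem supplies for free, so your argument is sound but would need that verification written out to be complete; the further identification of the circuit with one of the shapes in Figure~\ref{Fig08:NLC} is not required for the lemma itself.
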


\begin{proof}
By a generalized Poincar\'e--Bendixson theorem (Theorem 2.6.1 \cite{NZ}), 
the $\omega$-limit  (resp. the $\alpha$-limit) set of a non-closed orbit is either 
a singular orbit, an attracting (resp. a repelling) limit circuit, or a quasi-minimal set.
Since $S$ is filled with proper orbits, there are no quasi-minimal sets.
Since $v$ is regular, each singular orbit is either a center, a saddle, a $\partial$-saddle, 
a $\partial$-sink, a $\partial$-source, a sink, or a source. 
\end{proof}

We assume the non-existence of $\partial$-sources and $\partial$-sinks in this paper since these structures are generically approximated by slidable $\partial$-saddles.
Then, this lemma implies that the set of orbits generated by the flows of finite type is decomposed
into three categories; (i) Limit sets and their connecting non-closed orbits belonging to
$\mathrm{Sing}(v) \sqcup \mathrm{P}(v)$, (ii) centers in $\mathrm{Sing}(v)$, (iii) non-closed orbits in $\mathrm{int}P(v)$, and (iv) periodic orbits in  the interior of the surface  belonging to $\mathrm{Per}(v)$ and those on the boundary of the surface in  $\partial S \cap \mathrm{Per}(v)$. Based on the observations, we introduce the union of orbits contained in 
$D_{ss}(v)$, called {\it border orbits}.
\begin{definition}
The union of border orbits, say $\mathrm{Bd}(v)$, for the flow of finite type $v$ on the surface $S$ is given by
\[
\mathrm{Bd}(v):= \mathrm{Sing}(v)  \cup \mathrm{P}_\mathrm{sep}(v) \cup \partial_\mathrm{Per}(v) \cup \partial\mathrm{P}(v)\cup \partial\mathrm{Per}(v),
\]
where 
\begin{itemize}
\item $\mathrm{P}_\mathrm{sep}(v)$ denotes the union of saddle separatrices and ss-separatrices in $\mathrm{int}\mathrm{P}(v)$,
\item $\partial_\mathrm{per}(v)$ denotes  the union of periodic orbits along boundaries in $\partial S \cap \mathrm{int}\mathop{\mathrm{Per}}(v)$,
\item $\partial\mathrm{P}(v)$ denotes the boundary of the union of non-closed proper orbits,
\item $\partial\mathrm{Per}(v)$ denotes the boundary of the union of periodic orbits.
\end{itemize}
\label{def:border-orbits}
\end{definition}
 Since any orbit in $\mathrm{P}_\mathrm{sep}(v)$ belongs to $\mathrm{int} \mathrm{P}(v)$, it cannot be a limit set. A periodic orbit in $\partial_{\mathrm{per}}(v)$ is
 going along a solid boundary. 
 Orbits in $\partial \mathrm{P}(v)$ and  $\partial \mathrm{Per}(v)$  consist of limit cycles and non-trivial limit circuits respectively. Any limit circuit in $\partial \mathrm{Per}(v)$ is a boundary of  $\mathrm{Per}(v)$. At the same time, it should be a boundary of $\mathrm{P}(v)$. Otherwise, it is an interior point of $\mathrm{Per}(v)$. 
  
   In the topological classification theory of structurally stable Hamiltonian flows, a unique  tree structure has been assigned based on the composition
   of two-dimensional domains separated by the border orbits~\cite{UYS18}.
  In the same spirit, we classify two-dimensional domains in $(\mathrm{Bd}(v))^c=S-\mathrm{Bd}(v)$ for the flow of finite type $v$ by introducing the notion of {\it orbit space}
  as follows.
\begin{definition}
Let $T \subset S$ denote a union of proper orbits generated by the flow $v$ on the surface $S$. Then the {\it orbit space} is defined as
 a quotient set $T/\sim$, where the equivalence relation $\sim$ is defined by $x\sim y$ for 
 $x,y \in T$, if $O(x)=O(y)$.
 \label{def:orbit_space}
\end{definition}
 Figure~\ref{fig:flowbox}(a) shows an open box filled with non-closed orbits such as uniform flows. It is called a {\it trivial flow box} whose 
 orbit space, by definition, is equivalent to an open interval. An open annulus filled with non-closed orbits in Figure~\ref{fig:flowbox}(b) is called 
 a {\it transverse annulus}, while an open annulus filled
  with periodic orbits in Figure~\ref{fig:flowbox}(c) is called a {\it periodic annulus}. Their orbit spaces are a circle and an open interval, respectively. Then we have the
  following theorem, called the topological structure theorem for flows of finite type.

 \begin{theorem}\label{thm:flowbox}
For any flow of finite type on a surface $S  \subseteq \mathbb{S}^2$, 
each connected component of $(\mathop{\mathrm{Bd}(v)})^c=S - \mathop{\mathrm{Bd}(v)}$ is either  a trivial flow box in $\mathrm{P}(v)$, a transverse 
annulus in $\mathrm{P}(v)$, or a periodic annulus in $\mathop{\mathrm{Per}}(v)$.
\end{theorem}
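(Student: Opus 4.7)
The plan is to fix a connected component $C$ of $S \setminus \mathrm{Bd}(v)$ and identify its topological type by combining local flow-box arguments with a description of the orbit space $C/\!\sim$. First I would observe that by the very definition of $\mathrm{Bd}(v)$ the set $C$ contains no singular orbits, no saddle separatrices or ss-separatrices, no periodic boundary orbits, and no boundary points of $\mathrm{P}(v)$ or $\mathrm{Per}(v)$. Since $\mathrm{int}\,\mathrm{Per}(v)$ and $\mathrm{int}\,\mathrm{P}(v)$ are disjoint open sets whose common topological frontier is contained in $\partial\mathrm{Per}(v)\cup\partial\mathrm{P}(v)\subseteq\mathrm{Bd}(v)$, the connectedness of $C$ forces $C$ to lie entirely in one of the two. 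Moreover every orbit $O(x)\subseteq C$ is proper by condition~(1), and by Lemma~\ref{lem51} its $\alpha$- and $\omega$-limit sets are contained in $\mathrm{Bd}(v)$, hence disjoint from $C$.

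Case 1: $C\subseteq\mathrm{int}\,\mathrm{Per}(v)$. Every orbit in $C$ is a closed orbit lying entirely in $C$, and the periodic flow-box theorem provides, at each point of $C$, a small transverse section intersecting each nearby periodic orbit exactly once. Gluing these sections shows that $C$ is a circle bundle over the orbit space $C/\!\sim$, which is therefore a Hausdorff 1-manifold. If $C/\!\sim$ were a circle, $C$ would be homeomorphic to the torus and could not embed in $\mathbb{S}^2$; thus $C/\!\sim$ is an open interval. Orientability of $S\subseteq\mathbb{S}^2$ trivializes the bundle, giving $C\cong S^1\times\mathbb{R}$, i.e., a periodic annulus.

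Case 2: $C\subseteq\mathrm{int}\,\mathrm{P}(v)$. Each orbit in $C$ is proper and non-closed, so it is homeomorphic to $\mathbb{R}$ and its closure in $C$ equals itself (its limit sets escape into $\mathrm{Bd}(v)$). The standard flow-box theorem then gives at each point a neighbourhood $U\cong(-1,1)\times(-1,1)$ in which orbits are horizontal slices, and properness together with the absence of in-$C$ limit points ensures that the identifications in these boxes descend to a Hausdorff 1-manifold structure on $C/\!\sim$. Since $C/\!\sim$ is connected, it is either an open interval, in which case $C$ is a trivial flow box, or a circle, in which case orientability of $S$ rules out a Möbius band and $C$ is a transverse annulus.

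The main obstacle is verifying that the quotient $C/\!\sim$ is an honest Hausdorff 1-manifold rather than a branched or non-Hausdorff object. In Case 2 one must rule out the possibility that two distinct non-closed orbits in $C$ become arbitrarily close for all time and thus project to the same or infinitesimally close points in $C/\!\sim$. The key input here is that all orbits are proper and that their limit sets lie in $\mathrm{Bd}(v)$, which is disjoint from $C$: this forces any two nearby orbits in a flow box to remain separated by transverse arcs that extend across $C$, so the quotient topology on $C/\!\sim$ is genuinely Hausdorff and the local flow-box charts globalize into the stated product structures.
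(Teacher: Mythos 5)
First, a point of comparison: the paper does not actually prove Theorem~\ref{thm:flowbox} --- it defers entirely to \cite{Y} --- so there is no in-paper argument to measure yours against. Your overall architecture is the natural one and its reductions are correct: a component $C$ avoids $\mathrm{Sing}(v)$, $\partial\mathrm{Per}(v)$ and $\partial\mathrm{P}(v)$, hence lies entirely in $\mathrm{int}\,\mathrm{Per}(v)$ or in $\mathrm{int}\,\mathrm{P}(v)$; and once the orbit space is known to be a Hausdorff $1$-manifold with the quotient map a locally trivial fibration, orientability and the embedding in $\mathbb{S}^2$ pin down the three types.

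The genuine gap sits exactly at the step you yourself flag as ``the main obstacle'': your justification of Hausdorffness is circular. The implication you assert --- all orbits of $C$ are proper with limit sets in $\mathrm{Bd}(v)$, therefore the leaf space of $C$ is Hausdorff --- is false as stated: the separatrices of a saddle are themselves proper orbits whose limit sets lie in $\mathrm{Bd}(v)$, and yet adjacent stable and unstable separatrices are non-separated leaves. What actually kills the branching inside $C$ are two facts absent from your argument. (i) By Lemma~\ref{lem51} together with the definitions of saddle separatrix and ss-separatrix, any orbit whose $\omega$- or $\alpha$-limit set is a ($\partial$-)saddle already lies in $\mathrm{P}_{\mathrm{sep}}(v)\subseteq\mathrm{Bd}(v)$; hence every orbit of $C$ has as $\omega$-limit an attracting structure (sink, $\partial$-sink, attracting limit cycle or limit circuit) and as $\alpha$-limit a repelling one. (ii) Each such structure admits arbitrarily small one-sided trapping neighborhoods, i.e.\ forward-invariant collars bounded by closed transversals. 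Granting (i) and (ii): if $O_1\neq O_2$ in $C$ were non-separated, there would be $y_n\to x_1\in O_1$ and times $t_n$ with $v_{t_n}(y_n)\to x_2\in O_2$; bounded $t_n$ forces $O_1=O_2$, while $t_n\to+\infty$ forces the orbit of $y_n$, for large $n$, to enter every prescribed trapping collar of $\omega(O_1)$ and never leave, so that $x_2\in\omega(O_1)\subseteq\mathrm{Bd}(v)$, contradicting $x_2\in C$. The same trapping argument is what rules out an orbit of $C$ meeting a small transversal more than once (a priori possible by spiralling), which you need for the local triviality of the quotient map and which is also unaddressed. These points are the real content of the theorem --- the rest is bookkeeping --- so as written the proof does not go through, although the missing pieces are standard Poincar\'e--Bendixson/Maier theory of the kind invoked in \cite{Y}.
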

 This theorem claims that there are three types of two-dimensional domains separated by
the border orbits shown in  Figure~\ref{fig:flowbox}.  The proof has been provided in~\cite{Y} in a general mathematical framework.
\begin{figure}
\begin{center}
\includegraphics[scale=0.4]{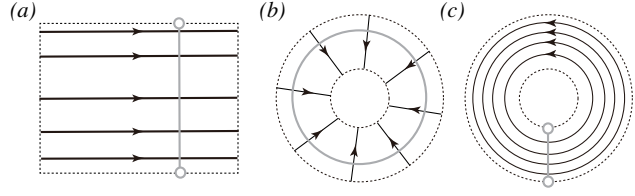}
\end{center}
\caption{Two-dimensional domains in $(\mathrm{Bd}(v))^c$ for the flow of finite type $v$, and their corresponding orbit spaces. Orbits filling the domains are drawn as black
solid lines with arrows, while their orbit spaces are shown as dashed gray curves. (a) A trivial flow box whose orbit space is an open interval. (b) A transverse annulus whose orbit space is a circle. (c) A periodic annulus whose orbit space is an open interval.}
\label{fig:flowbox}
\end{figure}

\section{Combinatorial representations for orbit structures}
\subsection{Local orbit structures in $\mathrm{Bd}(v)$ and $(\mathrm{Bd}(v))^c$ and COT symbols}
 We introduce all local orbit structures contained in $\mathrm{Bd}(v)$ and $(\mathrm{Bd}(v))^c$ in Table~\ref{tbl:All_structures}, which are categorized into some classes as summarized in
 Table~\ref{tbl:COT_Structures}. At the same time, we provide a unique symbol, called a {\it COT symbol}, associated with each local orbit structure. 
 The COT symbols express local orbit structures and the composition of their surrounding orbit structures.

\begin{table}
\begin{center}
\caption{All orbit structures generated by the flow of finite type and their COT symbols. Double-signs correspond.}
\begingroup
\scalefont{0.93}
\begin{tabular}{|l|c|l|l|}\hline
\multicolumn{2}{|c|}{Root structure} & COT symbol & Figure\\ \hline
center & $\sigma_{\emptyset\pm}$ &$\sigma_{\emptyset\mp}(\B_{b\emptyset\pm})$  & \ref{fig:Fundamental_structures}(a) \\ \hline
& $\sigma_{\emptyset \widetilde{\pm}0}$ & $\sigma_{\emptyset \widetilde{\mp}0}(b_{\widetilde{\pm}}(\B_{\widetilde{\pm}}, \{ \B_{a\widetilde{\pm}s} \}))$ & \ref{fig:Fundamental_structures}(a)  \\  \cline{2-4}
sink/source & $\sigma_{\emptyset \widetilde{\pm}+}$ & $\sigma_{\emptyset \widetilde{\mp}+}(b_{\widetilde{\pm}}(\B_{\widetilde{\pm}}, \{ \B_{a\widetilde{\pm}s} \}))$  & \ref{fig:Fundamental_structures}(a) \\ \cline{2-4}
& $\sigma_{\emptyset \widetilde{\pm}-}$ & $\sigma_{\emptyset \widetilde{\mp}-}(b_{\widetilde{\pm}}(\B_{\widetilde{\pm}}, \{ \B_{a\widetilde{\pm}s} \}))$  & \ref{fig:Fundamental_structures}(a) \\ \hline
\multirow{2}{*}{boundary} & $\beta_{\emptyset\pm}$ & $\beta_{\emptyset\mp}(\B_{b_\pm}, \{ \B_{c\mp s} \})$& \ref{fig:Fundamental_structures}(b)\\ \cline{2-4}
& $\beta_{\emptyset 2}$ & $\beta_{\emptyset 2}(\{ \B_{c+s}, \B_{\widetilde{+}}, \B_{c-s}, \B_{\widetilde{-}}, \B_{\gamma + s}\}, \B_{as}  )$ &\ref{fig:Fundamental_structures}(c) \\\hline\hline
\multicolumn{2}{|c|}{2D structure} & COT symbol & Figure\\ \hline
transverse annulus & $b_{\widetilde{\pm}}$ & $b_{\widetilde{\pm}}(\B_{\widetilde{\pm}}, \{ \B_{a\widetilde{\pm}s} \})$&\ref{fig:Saturated_structures}(a)  \\ \hline
periodic annulus &  $b_\pm$ & $b_\pm(\B_{\alpha\pm})$ & \ref{fig:Saturated_structures}(b) \\ \hline\hline
\multicolumn{2}{|c|}{Isolated structure} & COT symbol & Figure \\ \hline
center  & $\sigma_\pm$ & $\sigma_\pm$ & \ref{fig:Boundary_isolated_point}(b) \\ \hline
 &   $\sigma_{\widetilde{\pm}0}$ & $\sigma_{\widetilde{\pm}0}$& \ref{fig:Boundary_isolated_point}(b)  \\ \cline{2-4}
sink/source &  $\sigma_{\widetilde{\pm}\mp}$ & $\sigma_{\widetilde{\pm}\mp}$ & \ref{fig:Boundary_isolated_point}(b)  \\ \cline{2-4}
 &   $\sigma_{\widetilde{\pm}\pm}$ & $\sigma_{\widetilde{\pm}\pm}$& \ref{fig:Boundary_isolated_point}(b)  \\ \hline
\multicolumn{2}{|c|}{Cycles} & COT symbol & Figure \\ \hline
\multirow{2}{*}{periodic orbit} &  $p_{\widetilde{\pm}}$ & $p_{\widetilde{\pm}}(\B_{b\widetilde{\pm}})$ & \ref{fig:Boundary_isolated_point}(c)  \\ \cline{2-4}
&  $p_\pm$ & $p_\pm(\B_{b\pm})$ &  \ref{fig:Boundary_isolated_point}(d)   \\ \hline\hline
\multicolumn{2}{|c|}{Circuits} & COT symbol & Figure\\ \hline
ss-saddle connection&  $a_\pm$ & $a_\pm(\B_{b\pm})$& \ref{fig:S1_structures}(a)  \\ \cline{2-4}
for a saddle &  $q_\pm$ & $q_\pm(\B_{\widetilde{+}}, \B_{\widetilde{-}}, \B_{as})$ & \ref{fig:S1_structures}(b) \\ \hline
\multirow{2}{*}{figure eight} &  $b_{\pm\pm}$ & $b_{\pm\pm} \{ \B_{b\pm}, \B_{b\pm} \}$& \ref{fig:S2_structures}(a)  \\ \cline{2-4}
&  $b_{\pm\mp}$ & $b_{\pm\mp}(\B_{b\pm}, \B_{b\mp})$& \ref{fig:S2_structures}(b)   \\ \hline
periodic $\partial$ component &  $\beta_\pm$ & $\beta_\pm\{ \B_{c\pm s} \}$& \ref{fig:S4_structures}(a) \\ \hline
self-connected  &  $c_\pm$ & $c_\pm(\B_{b\pm}, \B_{c\mp s})$& \ref{fig:S4_structures}(b) \\ \cline{2-4}
separatrices on $\partial$ &  $c_{2\pm}$ & $c_{2\pm}(\B_{c\mp s}, \B_{\widetilde{\pm}}, \B_{c\pm s}, \B_{\widetilde{\mp}}, \B_{\gamma \pm s},\B_{c\mp s},  \B_{as})$& \ref{fig:S4_structures}(c)  \\  \hline\hline
\multicolumn{2}{|c|}{Slidable saddle} & COT symbol & Figure \\ \hline
&  $a_2$ & $a_2(\B_{c+s}, \B_{c-s}, \B_{\gamma-s})$ & \ref{fig:S5_structures}(a)  \\  \cline{2-4}
& $\gamma_{\widetilde{+}-}$ & $\gamma_{\widetilde{+}-}(\B_{c-s}, \B_{\widetilde{+}}, \B_{c+s})$&\ref{fig:S5_structures}(b) \\ \cline{2-4}
slidable $\partial$-saddle& $\gamma_{\widetilde{-}-}$ & $\gamma_{\widetilde{-}-}(\B_{c-s}, \B_{c+s}, \B_{\widetilde{-}})$ &\ref{fig:S5_structures}(b)\\ \cline{2-4}
& $\gamma_{\widetilde{+}+}$ & $\gamma_{\widetilde{+}+}(\B_{c+s}, \B_{c-s}, \B_{\widetilde{+}})$ &\ref{fig:S5_structures}(c) \\  \cline{2-4}
& $\gamma_{\widetilde{-}+}$ & $\gamma_{\widetilde{-}+}(\B_{c+s}, \B_{\widetilde{-}}, \B_{c-s})$ &\ref{fig:S5_structures}(c) \\  \hline
\multirow{2}{*}{slidable saddle} &  $a_{\widetilde{\pm}}$ & $a_{\widetilde{\pm}}\{ \B_{\widetilde{\pm}},  \B_{\widetilde{\pm}}\}$&\ref{fig:S3_structures}(a)  \\ \cline{2-4}
&   $q_{\widetilde{\pm}}$ & $q_{\widetilde{\pm}}( \B_{\widetilde{\pm}})$&\ref{fig:S3_structures}(b)   \\ \hline
\end{tabular}
\endgroup
  \label{tbl:All_structures}
\end{center}
\end{table}

 \begin{table}
 \caption{Classes of orbit structures used in COT symbols.}
\begin{center}
\begingroup
\scalefont{0.8}
\begin{tabular}{|l|c|c|c|}\hline
Class & Boxes  & Orbit structures & Note \\ \hline
class-$b_{\emptyset \pm}$ &$\B_{b_{\emptyset \pm}}$ &  $\{  b_\pm \}$ & \multirow{4}{*}{2D orbit structures} \\ \cline{1-3}class-$b_+$ &$\B_{b_+}$ &  $\{  b_{\widetilde{\pm}}, b_+ \}$ &  \\ \cline{1-3}
class-$b_-$ &$\B_{b_-}$ &  $\{  b_{\widetilde{\pm}}, b_- \}$ &  \\ \cline{1-3}
class-$b_{\widetilde{\pm}}$ &$\B_{b_{\widetilde{\pm}}}$ & $\{ b_{\widetilde{\pm}} \}$ & \\ \hline
class-$\widetilde{+}$ & $\B_{\widetilde{+}}$ & $\{ p_\pm, b_{\pm\pm}, b_{\pm\mp}, q_{\pm}, \beta_\pm, a_{\widetilde{+}}, \sigma_{\widetilde{+}\pm}, \sigma_{\widetilde{+}0}\}$ & source structures\\ \hline
class-$\widetilde{-}$ & $\B_{\widetilde{-}}$ & $\{ p_\pm, b_{\pm\pm}, b_{\pm\mp}, q_{\pm}, \beta_\pm, a_{\widetilde{-}},\sigma_{\widetilde{-}\pm}, \sigma_{\widetilde{-}0} \}$ & sink structures \\ \hline
class-$\alpha_+$ & $\B_{\alpha_+}$ & $\{ p_{\widetilde{+}}, b_{++}, b_{+-}, q_+, \beta_+, \sigma_+ \}$ & used in $b_{+}$\\ \hline
class-$\alpha_-$ & $\B_{\alpha_-}$ & $\{ p_{\widetilde{-}}, b_{-+}, b_{--}, q_-, \beta_-, \sigma_- \}$ & used in $b_{-}$\\ \hline
class-$a$ &$\B_a$ & $\{ a_\pm, a_2\}$ & between sink/source structures \\ \hline
class-$a_{\widetilde{+}}$ &$\B_{a\widetilde{+}}$ & $\{ q_{\widetilde{-}}, a_\pm, a_2\}$ & used in $b_{\widetilde{+}}$\\ \hline
class-$a_{\widetilde{-}}$ &$\B_{a\widetilde{-}}$ & $\{ q_{\widetilde{+}}, a_\pm, a_2\}$ & used in $b_{\widetilde{-}}$ \\ \hline
class-$c_+$ &$\B_{c_+}$ &$\{ c_+, c_{2+}\}$ & \\ \cline{1-3}
class-$c_-$ &$\B_{c_-}$ &$\{ c_-, c_{2-}\}$ & \multirow{2}{*}{used near boundaries}  \\ \cline{1-3}
class-$\gamma_+$ &$\B_{\gamma_+}$ & $\{\gamma_{\widetilde{\pm}+} \}$ &  \\ \cline{1-3}
class-$\gamma_-$ &$\B_{\gamma_-}$ & $\{\gamma_{\widetilde{\pm}-} \}$ &  \\ \hline
\end{tabular}
\endgroup
\label{tbl:COT_Structures}
\end{center}
\end{table}

\subsubsection{Two-dimensional orbit structures in $(\mathrm{Bd}(v))^c$}
 We  assign no symbol to the trivial flow box in Figure~\ref{fig:flowbox}(a), since we regard it as the default orbit structure.  
 For the domains in Figure~\ref{fig:flowbox}(b,c), we assign the following COT symbols. They are categorized into class-$b_{\widetilde{\pm}}$ and class-$b_\pm$. See Table~\ref{tbl:COT_Structures}.

\begin{figure}
\begin{center}
\includegraphics[scale=0.4]{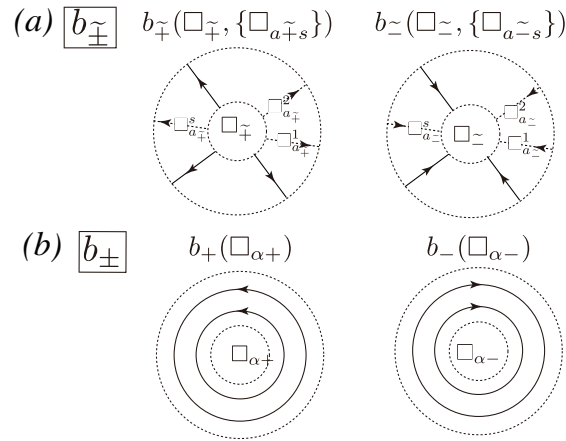}
\end{center}
\caption{The COT symbols for the orbit structures in $(\mathrm{Bd}(v))^c$. (a) Transverse annuli, whose COT 
symbols are  $b_{\widetilde{\pm}}(\B_{\widetilde{\pm}},  \{\B_{a\widetilde{\pm}s} \})$. (b) Periodic annuli, whose COT symbols are given by  $b_\pm(\B_{\alpha\pm})$. They are in double-sign correspondence.}
\label{fig:Saturated_structures}
\end{figure}

{\bf [Orbit structures $b_{\widetilde{\pm}}$]}  The COT symbol of an attracting transverse annulus in Figure~\ref{fig:flowbox}(b) is given by $b_{\widetilde{-}}(\B_{\widetilde{-}}, \{\B_{a\widetilde{-}s}\})$.  See also Figure~\ref{fig:Saturated_structures}(a). The subscript $\widetilde{-}$ means that the transverse annulus contains attracting non-closed orbits, 
 and the square symbol $\B_{\widetilde{-}}$ indicates that  a sink structure of class-$\widetilde{-}$, as listed in Table~\ref{tbl:COT_Structures}, is  embedded inside the inner boundary of the annulus.
 The symbol $\B_{a\widetilde{\pm}s}$ indicates that any number ($s\geq 0$) of class-$a_{\widetilde{\pm}}$ orbit structures are contained in the transverse 
 annulus. Precisely, $\B_{a\widetilde{\pm}s}$ in the COT symbol is described by either of
\begin{equation}
\B_{a\widetilde{\pm}s} := \B_{a\widetilde{\pm}}^1 \cdot \cdots \cdot  \B_{a\widetilde{\pm}}^s  \quad (s>0), \qquad  \B_{a\widetilde{\pm}s}:=\lambda_\sim \quad (s=0),
\label{COT_ATs}
\end{equation}
 in which $\lambda_\sim$ indicates that there contain no class-$a_{\widetilde{\pm}}$ orbit structures. The class-$a_{\widetilde{\pm}}$ orbit structures are
 arranged in cyclic order.
 That is to say, picking up one of them as the first structure and reading the others in the counter-clockwise direction, we assign the symbol $\B_{a\widetilde{\pm}}^i$ to the $i$th class-$a_{\widetilde{\pm}}$ orbit structure. They are then arranged sequentially with the separator ``$\cdot$'' as (\ref{COT_ATs}). To express the cyclic arrangement of (\ref{COT_ATs}) explicitly,
  they are enclosed in the parentheses $\{ \}$ in the COT symbol. If we reverse the direction of non-closed orbits, a repelling transverse annuls is represented by 
  $b_{\widetilde{+}}(\B_{\widetilde{+}},\{\B_{a\widetilde{+}s}\} )$, in which $\B_{\widetilde{+}}$ and $\B_{a\widetilde{+}s}$ denote a class-$\widetilde{+}$ source structure 
  inside the inner boundary and class-$a_{\widetilde{+}}$ orbit structures in the annulus respectively.

{\bf[Orbit structures $b_{\pm}$]}  The COT symbol of  a counter-clockwise periodic annulus  in Figure~\ref{fig:flowbox}(c) is given by  $b_+(\B_{\alpha_+})$, where $\B_{\alpha_+}$ denotes  a class-$\alpha_+$  orbit structure inside the inner boundary.  For a clockwise periodic annulus, the COT symbol becomes 
$b_-(\B_{\alpha -})$ with a class-$\alpha_-$ orbit structure  inside. See Figure~\ref{fig:Saturated_structures}(b).

\subsubsection{Border orbits in $\partial_\mathrm{per}(v)$ and $\mathrm{Sing}(v)$: $\beta_\pm$, $\sigma_{\widetilde{\pm}\pm}$, $\sigma_{\widetilde{\pm}0}$, $\sigma_\pm$} 
A periodic orbit in $\partial_\mathrm{per}(v)$, by definition, is going along a solid 
boundary with no $\partial$-saddle separatrices as in Figure~\ref{fig:Boundary_isolated_point}(a). We assign the COT symbol $\beta_+\{ \lambda_+\}$ (resp. $\beta_-\{ \lambda_- \}$)
 to a counter-clockwise (resp. a clockwise)  orbit.  Here, $\lambda_\pm$  expresses that ``no orbit structures'' are attached to the boundary. 
 Centers and sources/sinks are elements of  $\mathrm{Sing}(v)$. A center having
 counter-clockwise (resp. clockwise) periodic orbits in its neighborhood is denoted by $\sigma_{+}$ (resp. $\sigma_{-}$).
Sources/sinks are classified by the flow direction around them. This classification is unnecessary within topological equivalence but it is useful from the application points of view.
 The COT symbol of a source  associated with counter-clockwise (resp. clockwise) non-closed orbits  is given by $\sigma_{\widetilde{+}+}$ 
 (resp. $\sigma_{\widetilde{+}-}$). When the flow direction is reversed, we have a sink surrounded by clockwise (resp. counter-clockwise) non-closed orbits, 
 whose COT symbol becomes  $\sigma_{\widetilde{-}-}$ (resp. $\sigma_{\widetilde{-}+}$). On the other hand, a source (resp. a sink) associated 
 with non-rotating non-closed orbits is represented by $\sigma_{\widetilde{+}0}$ (resp. $\sigma_{\widetilde{-}0}$). See Figure~\ref{fig:Boundary_isolated_point}(b). 
 They  are elements of class-$\widetilde{\pm}$ 
 and class-$\alpha_\pm$. The border orbits $\beta_\pm$, $\sigma_\pm$ and $\sigma_{\widetilde{\pm}}$ have no internal structure. 
\begin{figure}
\begin{center}
\includegraphics[scale=0.4]{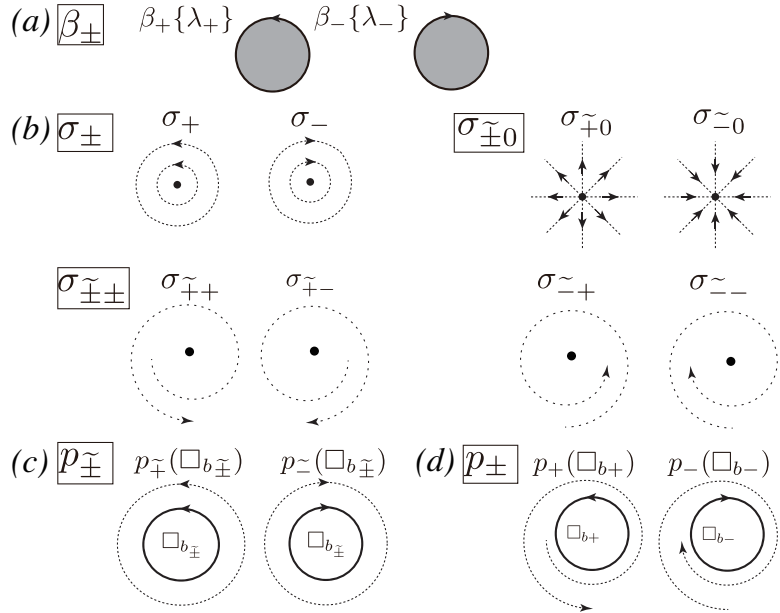}
\end{center}
\caption{(a) Border orbits in $\partial_\mathrm{per}(v)$ representing a periodic orbit along a solid boundary whose COT symbol is $\beta_\pm\{ \lambda_\pm \}$. (b) 
 Point structures in $\mathrm{Sing}(v)$. A center surrounded by counter-clockwise (resp. clockwise) periodic orbits is denoted by $\sigma_+$ (resp. $\sigma_-$). 
 A source and a sink having counter-clockwise/clockwise/non-rotating non-closed orbits in its neighborhood are represented by 
$\sigma_{\widetilde{\pm}+}$, $\sigma_{\widetilde{\pm}-}$  and $\sigma_{\widetilde{\pm}0}$, respectively. (c)  A limit cycle surrounded by periodic orbits outside, in which
 it contains an attracting/repelling transverse annulus. Its COT symbol is given by $p_{\widetilde{+}}(\B_{b\widetilde{\pm}})$ (resp. $p_{\widetilde{-}}(\B_{b\widetilde{\pm}})$) 
when the flow direction of the periodic orbits is counter-clockwise (resp. clockwise). (d) A limit cycle
with an attracting/repelling collar outside. Its COT symbol is 
$p_+(\B_{b+})$ (resp. $p_-(\B_{b-})$) when the limit cycle is going in the counter-clockwise (resp. clockwise) direction. These limit cycles are elements 
of $\partial\mathrm{P}(v)$ and $\partial\mathrm{Per}(v)$.} 
\label{fig:Boundary_isolated_point}
\end{figure}

\subsubsection{Root structures}
Let us introduce fundamental flow structures, called {\it root structures}, with their COT symbols in order to describe the conversion algorithm afterward.
\begin{figure}
\begin{center}
\includegraphics[scale=0.4]{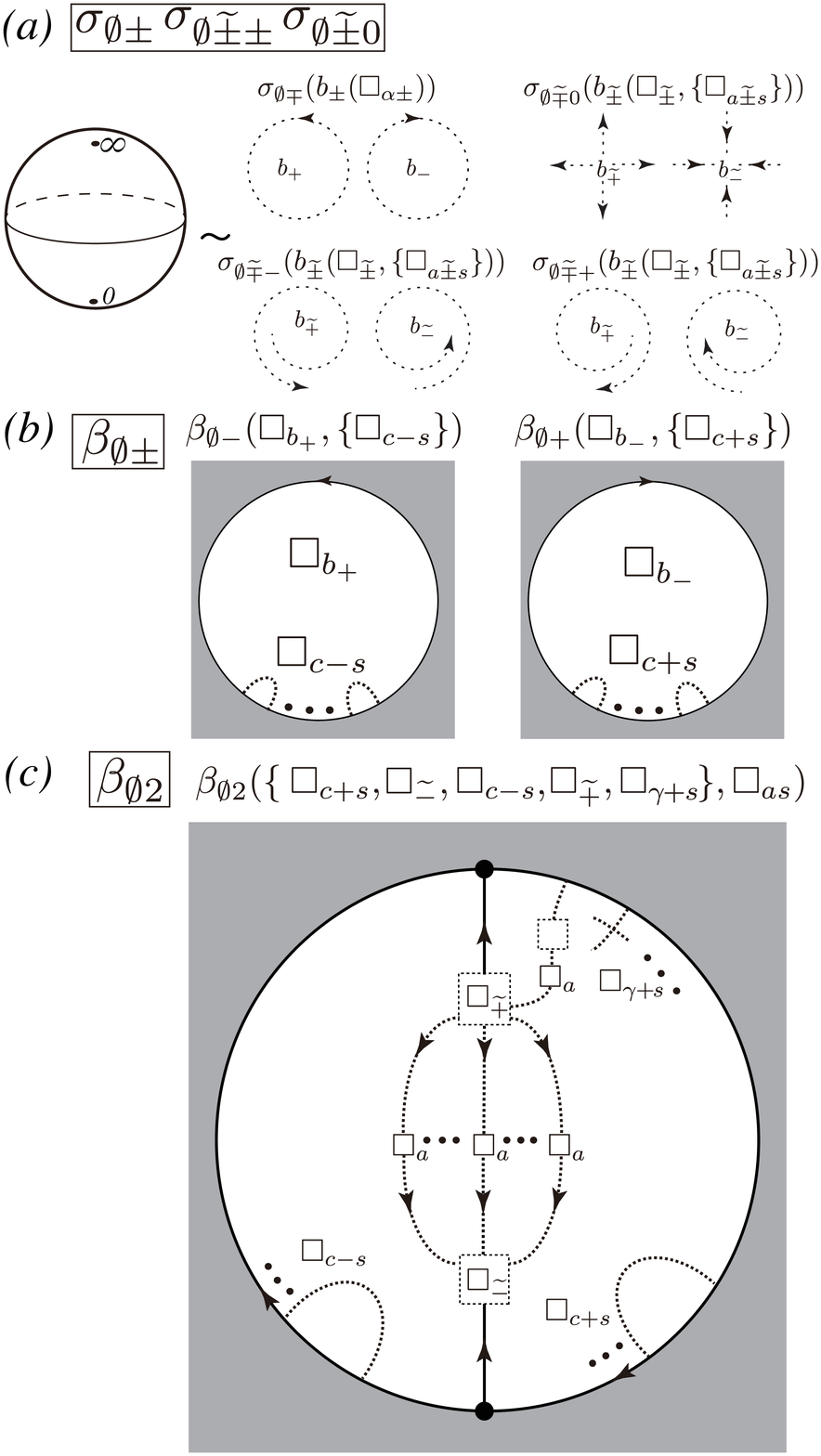}
\end{center}
\caption{Root structures for the flows of finite type on surfaces. (a) An unbounded plane without solid boundary, whose COT symbol is either 
$\sigma_{\emptyset\mp}(b_\pm(\B_{\alpha\pm}))$, $\sigma_{\emptyset\widetilde{\mp}0}(b_{\widetilde{\pm}}(\B_{\widetilde{\pm}}, \{\B_{a\widetilde{\pm}s}\}))$, $\sigma_{\emptyset\widetilde{\mp}-}(b_{\widetilde{\pm}}(\B_{\widetilde{\pm}}, \{\B_{a\widetilde{\pm}s}\}))$, 
or  $\sigma_{\emptyset\widetilde{\mp}+}(b_{\widetilde{\pm}}(\B_{\widetilde{\pm}}, \{\B_{a\widetilde{\pm}s}\}))$, depending on 
2D orbit structures  embedded in the domain. Double signs correspond. (b) Root structures $\beta_{\emptyset\pm}$, representing flows without any source/sink 
structures attached to the outer boundary. Its COT symbol is given by $\beta_{\emptyset\mp}(\B_{b_\pm}, \{ \B_{c\mp s} \})$. (c) Root structure $\beta_{\emptyset 2}$, having 
 source-sink structures, whose COT symbol is given by
$\beta_{\emptyset 2}(\{ \B_{c+s}, \B_{\widetilde{-}}, \B_{c-s}, \B_{\widetilde{+}}, \B_{\gamma + s}\}, \B_{as} )$.}
\label{fig:Fundamental_structures}
\end{figure}

{\bf (Root structures $\sigma_{\emptyset\pm}$, $\sigma_{\emptyset\widetilde{\pm}0}$ and $\sigma_{\emptyset \widetilde{\pm}\pm}$; Figure~\ref{fig:Fundamental_structures}(a))} 
 When there exist no solid boundaries in the unbounded plane,  the flow should have at least two singular orbits on the sphere which are elliptic centers, sources, and sinks. 
Moreover, suppose that there is no limit cycle and there are exactly two singular orbits. 
Then singular orbits consist of either two centers or a pair of a sink and a source.  
Without loss of generality, we may assume that the singular orbits are located at the origin and the point at the infinity of the plane. By the stereographic projection, they are identified with the north/south
 poles on the sphere $\mathbb{S}^2$.  By removing these two singular orbits, the flow of finite type on $\mathbb{S}^2$ is topologically equivalent to a periodic annulus or a transverse annulus as in Figure~\ref{fig:Fundamental_structures}(a). When the domain is a counter-clockwise periodic annulus $b_+(\B_{\alpha_+})$ around the center at the origin, 
  the flow is considered to rotate in the clockwise direction around the point at infinity. Hence, we give the COT symbol
 $\sigma_{\emptyset -}(b_+(\B_{\alpha+}))$ to the root structure. 
 
 When the flow direction is reversed, we have the COT symbol
 $\sigma_{\emptyset +}(b_{-}(\B_{\alpha-}))$.  When the singular orbit at the origin is a source (resp. a sink) associated with non-rotating  non-closed 
orbits, the orbit structure in its neighborhood is a repelling (resp. an attracting) transverse annulus represented by
 $b_{\widetilde{+}}(\B_{\widetilde{+}}, \{ \B_{a\widetilde{+}s} \} )$ (resp. $b_{\widetilde{-}}(\B_{\widetilde{-}}, \{ \B_{a\widetilde{-}s} \} )$, and
 the point at infinity becomes a sink (resp. a source). Hence, its COT symbol is given by
  $\sigma_{\emptyset\widetilde{-}0}(b_{\widetilde{+}}(\B_{\widetilde{+}}, \{ \B_{a\widetilde{+}s} \} ))$ 
  (resp. $\sigma_{\emptyset\widetilde{+}0}(b_{\widetilde{-}}(\B_{\widetilde{-}}, \{ \B_{a\widetilde{-}s} \} ))$).

When the singular orbit at the origin is a source (resp. a sink) associated with non-closed orbits rotating in the counter-clockwise direction, the point at infinity becomes a sink 
(resp. a source) associated with clockwise non-closed orbits. Then, the symbol 
$\sigma_{\emptyset\widetilde{-}-}(b_{\widetilde{+}}(\B_{\widetilde{+}}, \{ \B_{a\widetilde{+}s} \} ))$ (resp. $\sigma_{\emptyset\widetilde{+}-}(b_{\widetilde{-}}(\B_{\widetilde{-}}, \{ \B_{a\widetilde{-}s} \} ))$) is assigned to the root structure.
Reversing the flow direction, we obtain a source (resp. a sink) at infinity associated with counter-clockwise non-closed orbits, whose COT symbol becomes 
$\sigma_{\emptyset\widetilde{+}+}(b_{\widetilde{-}}(\B_{\widetilde{-}}, \{ \B_{a\widetilde{-}s} \} ))$ (resp. $\sigma_{\emptyset\widetilde{-}+}(b_{\widetilde{+}}(\B_{\widetilde{+}}, \{ \B_{a\widetilde{+}s} \} ))$).

{\bf (Root structures $\beta_{\emptyset\pm}$ and $\beta_{\emptyset 2}$; Figure~\ref{fig:Fundamental_structures}(b,c))} 
Suppose that a surface contains several solid boundaries. Then, by choosing one of them, one introduces the spherical coordinates whose 
 corresponding stereographic projection maps the flow domain on the sphere to that on a bounded domain enclosed by the chosen solid boundary from outside. 
 Figure~\ref{fig:Fundamental_structures}(b) shows an orbit structure without source/sink structures. Its COT symbol is given 
 by $\beta_{\emptyset-}(\B_{b+}, \{\B_{c-s} \} )$ when the flow direction along the outer boundary  is counter-clockwise, which is equivalent to a clockwise flow around the point at
 infinity. This structure necessarily embeds 
 a class-$b_+$ orbit structure in $\B_{b+}$, while any number ($s\geq 0$) of class-$c_-$ orbit structures can be attached to the boundary. When the flow 
 direction is reversed, we have an orbit structure represented by $\beta_{\emptyset+}(\B_{b-},  \{ \B_{c+s} \})$.
 Note that $\B_{c\pm s}$ describes the cyclic arrangement (\ref{COT_Cs}) of class-$c_\pm$ orbits structures.
 
 The root structure $\beta_{\emptyset 2}$ has sources-sink structures attached to the outer boundary as shown in Figure~\ref{fig:Fundamental_structures}(c). We pick 
 the leftmost source and sink structures as a specific pair and the other source/sink structures are  regarded as class-$\gamma_+$ orbit structures 
  in Figure~\ref{fig:S5_structures}(b).
 Along the outer boundary, it can also contain any number of class-$c_+$ (resp. class-$c_-$) orbit structures on the right (resp. the left) of the source-sink structure pair, which are 
 arranged as in (\ref{COT_Cs}). In the COT symbol, the specific source-sink pair is symbolized as $\B_{\widetilde{\pm}}$, and any number 
  $(s \geq 0)$ of the class-$\gamma_+$ orbit structures, symbolized as $\B_{\gamma + s}$, are attached to the righthand side of the outer boundary. They are arranged as
\[
\B_{\gamma + s}:= \B_{\gamma +}^1\cdot \cdots \cdot \B_{\gamma +}^s \quad (s>0), \qquad \B_{\gamma + s}:=\lambda_\sim \quad (s=0).
\]
 Note that all these orbit structures are arranged along the outer boundary in cyclic order. In addition, owing to the existence of the source-sink structure pair, there could exist any
  number of class-$a$ orbit structures, symbolized as $\B_{as}$, connecting the source structure $\B_{\widetilde{+}}$ and the sink structure $\B_{\widetilde{-}}$.  Hence, the COT symbol of 
  is provided by $\beta_{\emptyset 2}(\{ \B_{c+s}, \B_{\widetilde{-}}, \B_{c-s}, \B_{\widetilde{+}},\B_{\gamma + s}\}, \B_{as} )$.
 
\subsubsection{Limit cycles in $\partial\mathrm{P}(v)$ and $\partial\mathrm{Per}(v)$: $p_{\widetilde{\pm}}$, $p_\pm$}
 Since any limit cycle in $\partial\mathrm{P}(v)$ (resp. $\partial\mathrm{Per}(v)$) is
  a boundary of a transverse annulus (resp. a periodic annulus), it is a periodic orbit and, at the same time, it has one or two attracting/repelling collars. 
 Hence, a limit cycle surrounded by periodic orbits outside necessarily has an attracting/repelling collar inside as shown in Figure~\ref{fig:Boundary_isolated_point}(c). 
 It thus belongs to 
 $\partial\mathrm{Per}(v) \cap \partial\mathrm{P}(v)$ and its COT symbol is given by $p_{\widetilde{+}}(\B_{b\widetilde{\pm}})$ (resp. $p_{\widetilde{-}}(\B_{b\widetilde{\pm}})$)
 when the limit cycle is going in the counter-clockwise (resp. clockwise) direction, in which $\B_{b_{\widetilde{\pm}}}$ represents an internal attracting/repelling 
 orbit structure  in class-$b_{\widetilde{\pm}}$. On the other hand, when a limit cycle is surrounded by non-closed orbits outside,  it has already been  an element of $\partial\mathrm{P}(v)$. Then the internal orbit 
 structure is a transverse annulus or a periodic annulus. The COT symbol of the counter-clockwise (resp. the clockwise) limit cycle of this type 
  is given by $p_+(\B_{b+})$ (resp. $p_-(\B_{b-})$), in which $\B_{b\pm}$ symbolizes an internal orbit structure of class-$b_\pm$. The orbit structures $p_\pm$ and $p_{\widetilde{\pm}}$ are elements of class-$\widetilde{\pm}$ and class-$\alpha_\pm$. 

\subsubsection{Non-trivial circuits in $\partial\mathrm{P}(v)$, $\partial\mathrm{Per}(v)$ and $\mathrm{P}_\mathrm{sep}(v)$}
A non-trivial circuit with ($\partial$-)saddles is a union of saddle separatrices and ($\partial$-)saddles as shown in Figure~\ref{Fig08:NLC}. 
Since a saddle is accompanied by four separatrices, local orbit structures have the following possibilities.
\begin{itemize}
\item[($S_1$)] One separatrix is connecting to a source structure, another one is going to a sink structure. The remaining two separatrices are self-connected.
\item[($S_2$)] The saddle is connected by two self-connected saddle separatrices.
\item[($S_3$)] The two separatrices are connecting to two sources (or sink) structures. 
\end{itemize}
In the third case, the remaining two saddle separatrices cannot be self-connected, hence no non-trivial circuit with an ($S_3$)-saddle exists. Regarding a $\partial$-saddle, since it has only one free separatrix, we have two possibilities of local orbit structures.
\begin{itemize}
\item[($S_4$)] The separatrix is connected to another $\partial$-saddle on the same boundary.
\item[($S_5$)] The separatrix is connecting to a source/sink structure.
\end{itemize}
A $\partial$-saddle satisfying ($S_4$) always forms a self-connected $\partial$-saddle separatrix on the boundary. We will discuss the case of ($S_5$) later. Based on the observation,
 we introduce the COT symbols of non-trivial circuits with saddles.

\begin{figure}
\begin{center}
\includegraphics[scale=0.40]{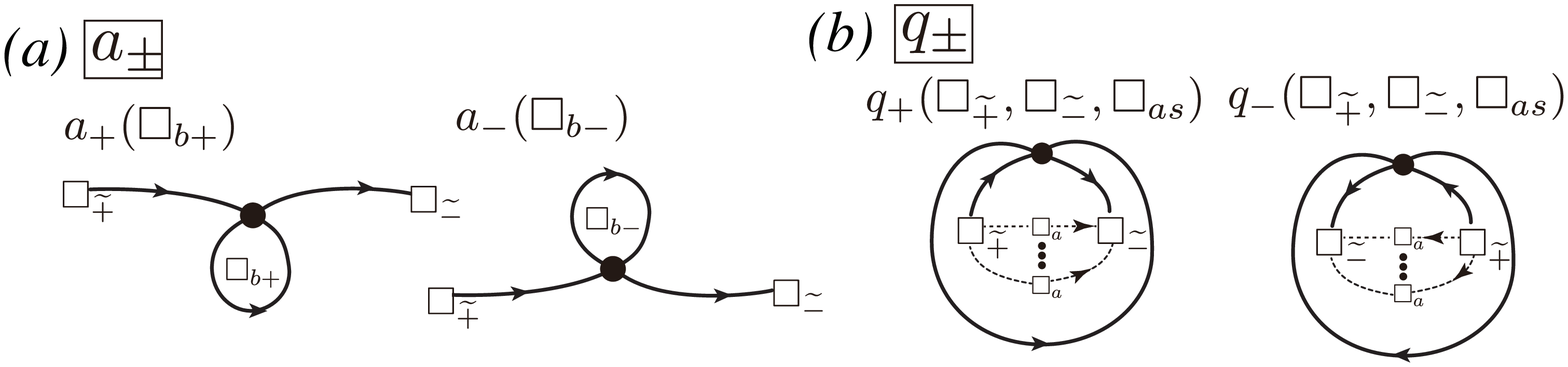}
\end{center}
\caption{Non-trivial circuits with an ($S_1$)-saddle. (a) The saddle has two ss-saddle separatrices connecting to a source structure and a sink structure outside the self-connected saddle separatrix, 
whose COT symbol is given by $a_\pm(\B_{b\pm})$.   (b) The saddle has two ss-saddle separatrices connecting to a source structure and a sink structure inside the self-connected 
saddle separatrix, whose COT symbol is $q_\pm(\B_{\widetilde{+}}, \B_{\widetilde{-}}, \B_{as} )$. The subscript $\pm$ is determined by the direction of the self-connected 
 saddle separatrix. They are in double-sign correspondence.}
\label{fig:S1_structures}
\end{figure}

{\bf [Non-trivial circuits  ($S_1$): $a_\pm$, $q_\pm$]} Figure~\ref{fig:S1_structures}(a) shows an obit structure with a self-connected saddle separatrix, outside of which two ss-saddle 
separatrices are connecting to a source structure and a sink structure. It is a class-$a$ orbit structure and its COT symbol is given by $a_+(\B_{b+})$ (resp. $a_-(\B_{b-})$) for 
the counter-clockwise (resp. the clockwise) saddle separatrix. The symbol $\B_{b+}$ (resp. $\B_{b-}$) indicates that  a class-$b_+$ (resp. a class-$b_-$) structure is enclosed
 by the self-connected saddle separatrix.

On the other hand, if ss-saddle separatrices connecting to a source structure and a sink structure exist inside of the self-connected saddle separatrix as shown in 
 Figure~\ref{fig:S1_structures}(b),
 its COT symbol is given by $q_+(\B_{\widetilde{+}}, \B_{\widetilde{-}}, \B_{as})$ (resp. $q_-(\B_{\widetilde{+}}, \B_{\widetilde{-}}, \B_{as})$) for the counter-clockwise
 (resp. the clockwise) saddle separatrix, in which $\B_{\widetilde{\pm}}$ and $\B_{as}$ denote the source and sink structures.
 The source-sink pair is connected by any number ($s\geq 0$) of class-$a$ orbit structures, which are symbolized by  
\begin{equation}
\B_{as} := \B_{a}^1 \cdot \cdots \cdot  \B_{a}^s  \quad (s>0), \qquad  \B_{as}:=\lambda_\sim \quad (s=0).
\label{COT_As}
\end{equation}
This orbit structure belongs to class-$\widetilde{\pm}$ and class-$\alpha_\pm$.

\begin{figure}
\begin{center}
\includegraphics[scale=0.40]{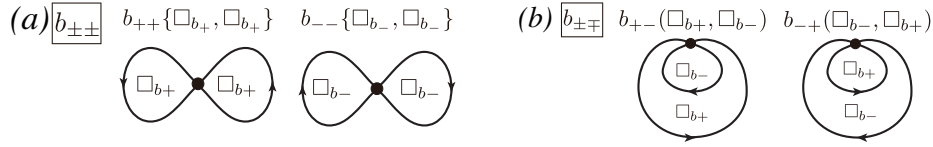}
\end{center}
\caption{Non-trivial circuits with an ($S_2$)-saddle. (a) Two saddle separatrices form a figure-eight structure whose COT symbol is given by $b_{\pm\pm} \{ \B_{b\pm}, \B_{b\pm} \}$. (b) An orbit structure where one self-connected saddle separatrix is enclosed by the other self-connected saddle separatrix. Its COT symbol is given by $b_{\pm\mp}(\B_{b\pm}, \B_{b\mp})$. All of them are in double-sign correspondence.}
\label{fig:S2_structures}
\end{figure}

{\bf [Non-trivial circuits ($S_2$): $b_{\pm\pm}$, $b_{\pm\mp}$]} When a non-trivial circuit with two self-connected saddle separatrices forms a figure-eight pattern as shown in 
  Figure~\ref{fig:S2_structures}(a), its COT symbol is represented by $b_{++}\{\B_{b+}, \B_{b+}\}$ or $b_{--}\{\B_{b-}, \B_{b-}\}$ depending on the direction of the saddle separatrices, in which 
 $\B_{b\pm}$ denotes class-$b_\pm$ orbit structure enclosed by the saddle separatrices. Since the order of the orbit structures $\B_{b_\pm}$ is not uniquely
 determined, they are enclosed by the parentheses $\{  \}$.

 Figure~\ref{fig:S2_structures}(b) shows a non-closed circuit with a saddle, in which a self-connected saddle separatrix is enclosed by another self-connected saddle 
 separatrix. Its COT symbol is $b_{+-}(\B_{b+}, \B_{b-})$ when the outer saddle separatrix is going in the counter-clockwise direction.
 Reversing the flow direction, we obtain a non-trivial circuit represented by $b_{-+}(\B_{b-}, \B_{b+})$. Inner class-$b_\pm$ orbit structures
 are denoted by $\B_{b\pm}$. They are elements of class-$\widetilde{\pm}$ and class-$\alpha_\pm$.

\begin{figure}
\begin{center}
\includegraphics[scale=0.39]{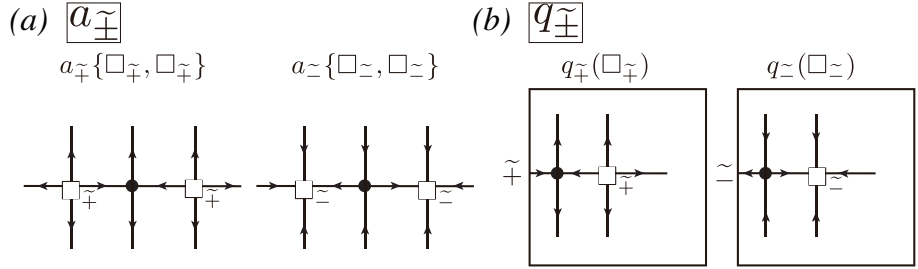}
\end{center}
\caption{Slidable saddles ($S_3$) associated with two source (or sink) structures. (a) Saddles and two source (sink) structures are in the same domain, whose COT symbols are given by $a_{\widetilde{\pm}}\{ \B_{\widetilde{\pm}}, \B_{\widetilde{\pm}}\}$. (b) A saddle and one source (sink) structure are contained inside the other source (sink) structure. Its COT symbols is given by $q_{\widetilde{\pm}}(\B_{\widetilde{\pm}})$. This orbit structure can be embedded only in a transverse annulus of class $b_{\widetilde{\pm}}$. All of them are in double-sign correspondence.}
\label{fig:S3_structures}
\end{figure}

{\bf [Slidable saddles ($S_3$): $a_{\widetilde{\pm}}$, $q_{\widetilde{\pm}}$]} A slidable saddle of ($S_3$) connecting to two source structures (resp. two sink structures)
 is represented by $a_{\widetilde{\pm}}$ (resp. $q_{\widetilde{\pm}}$). See Figure~\ref{fig:S3_structures}. Since the orbit structures are border orbits between trivial flow boxes, they belong to $\mathrm{P}_\mathrm{sep}(v)$. If the saddle and the two source (resp. sink) structures are in the same domain as in Figure~\ref{fig:S3_structures}(a), its COT symbol is  given by $a_{\widetilde{+}}\{ \B_{\widetilde{+}}, \B_{\widetilde{+}}\}$ (resp. $a_{\widetilde{-}}\{ \B_{\widetilde{-}},  \B_{\widetilde{-}}\})$. On the other hand, if the saddle and the one source (resp. sink) structure are contained in  the other source (resp. sink) structure as in Figure~\ref{fig:S3_structures}(b), the orbit structure is represented by $q_{\widetilde{+}}(\B_{\widetilde{+}})$  (resp. $q_{\widetilde{-}}(\B_{\widetilde{-}})$). Note that the orbit structure $q_{\widetilde{\pm}}$ is of class-$a_{\widetilde{\pm}}$ and it can be embedded only in a transverse annulus $b_{\widetilde{\pm}}$ in Figure~\ref{fig:Saturated_structures}(a). 
The flow boxes as in Figure~\ref{fig:S3_structures}(b) are called Cherry flow boxes. 

\begin{figure}
\begin{center}
\includegraphics[scale=0.40]{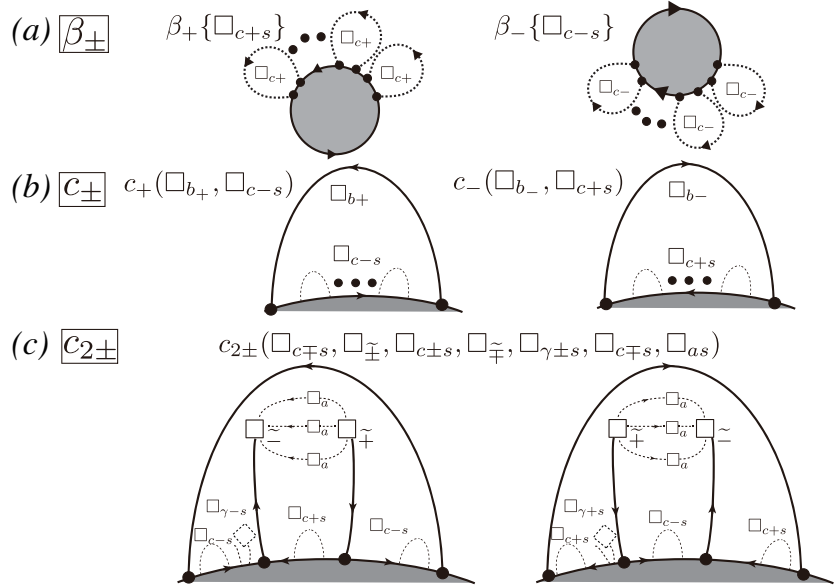}
\end{center}
\caption{Non-trivial circuits with ($S_4$)-saddles. (a) A solid boundary having self-connected $\partial$-saddle separatrices. When the flow direction along the boundary is counter-clockwise
 (resp. clockwise), the COT symbol is given by $\beta_+\{ \B_{c+s} \}$ (resp. $\beta_-\{ \B_{c-s} \}$). (b) When there contain no source-sink pairs in the 
domain enclosed by the $\partial$-saddle separatrix, the domain is filled with periodic orbits or non-closed orbits except a singular orbit and there are any number  of $\partial$-saddle separatrices inside, 
whose COT symbol is given by $c_\pm(\B_{b_\pm}, \B_{c \mp s})$. (c) An orbit structure containing slidable $\partial$-saddles inside a self-connected $\partial$-saddle separatrix attached
 to the boundary.
 Arranging the internal orbit structures from right to left, we have
  $c_{2\pm}(\B_{c\mp s}, \B_{\widetilde{\pm}}, \B_{c\pm s}, \B_{\widetilde{\mp}}, \B_{\gamma \mp s},\B_{c\mp s},  \B_{as})$ as its COT symbol. All of them are 
  in double-signs correspondence.}
\label{fig:S4_structures}
\end{figure}

{\bf [Non-trivial circuits ($S_4$): $\beta_\pm$, $c_\pm$, $c_2$]} If self-connected $\partial$-saddle separatrices are attached to the solid boundary, the flow along the boundary no longer belongs to $\partial_{\mathrm{per}}(v)$, but the orbit becomes a non-trivial circuit.   See Figure~\ref{fig:S4_structures}(a). 
 When the flow direction  is counter-clockwise (resp. clockwise), $\beta_+\{ \B_{c+s}\}$ (resp. $\beta_-\{ \B_{c-s} \}$) is assigned to the orbit structure.
 In the COT symbols, $\B_{c\pm s}$ is the abbreviation of 
\begin{equation}
\B_{c\pm s} := \B_{c\pm}^1\cdot \cdots \B_{c\pm}^s \quad (s>0), \qquad\B_{c\pm s}:=\lambda_\pm \quad (s=0),
\label{COT_Cs}
\end{equation}
in which $\B_{c\pm}^i$ ($1\leq i \leq s$) denotes the $i$th class-$c_\pm$ orbit structure arranged cyclically in the counter-clockwise direction.

On the other hand, we classify local orbit structures in the domain enclosed by the self-connected $\partial$-saddle separatrix and the solid boundary. 
When the domain contains no ss-separatrices between the boundary and source and sink structures, we obtain the orbit structures in Figure~\ref{fig:S4_structures}(b). The COT symbol becomes $c_+(\B_{b+}, \B_{c-s})$ (resp. $c_-(\B_{b-}, \B_{c+s})$) 
 when the flow direction of the $\partial$-saddle separatrix is counter-clockwise (resp. clockwise). In the domain, the two-dimensional orbit structure
 of class-$b_\pm$ is symbolized by  $\B_{b_\pm}$. In addition, any number ($s\geq 0$) of self-connected $\partial$-saddle separatrices with the opposite direction
 can be attached to the same boundary, which are represented by $\B_{c\mp s}$.

When ss-separatrices between the boundary and source and sink structures are contained in the domain, 
 we have an orbit structure with slidable $\partial$-saddles in a trivial flow box.
 Since we can attach any number of slidable $\partial$-saddles satisfying ($S_5$)
 to the boundary, we choose the rightmost 
 one as a special source-sink structure pair represented by $\B_{\widetilde{\pm}}$. The class-$a$ orbit structures connecting the source/sink structures are
 represented by $\B_{as}$ as in (\ref{COT_As}).
The other pairs of slidable $\partial$-saddles left to the special one are regarded as class-$\gamma_\pm$ orbit structures,  which are expressed by
 \[
\B_{\gamma \pm s}:= \B_{\gamma \pm}^1\cdot \cdots \cdot \B_{\gamma \pm}^s \quad (s>0), \qquad  \B_{\gamma \pm s}:=\lambda_\sim \quad (s=0).
\]
Between these source-sink pairs,  we can attach any number of $\partial$-saddle separatrices to the boundary, symbolized by $\B_{c\pm s}$. Arranging them 
 from the right, we obtain the COT representation 
 $c_{2+}(\B_{c-s}, \B_{\widetilde{+}}, \B_{c+s}, \B_{\widetilde{-}}, \B_{\gamma -s},\B_{c-s},  \B_{as})$ 
(resp. $c_{2-}(\B_{c+s}, \B_{\widetilde{-}}, \B_{c-s}, \B_{\widetilde{+}}, \B_{\gamma +s},\B_{c-s},  \B_{as})$) when the self-connected $\partial$-saddle separatrix goes in the counter-clockwise (resp. the clockwise) direction as in Figure~\ref{fig:S4_structures}(c). They are elements of class-$c_\pm$.  

\begin{figure}
\begin{center}
\includegraphics[scale=0.4]{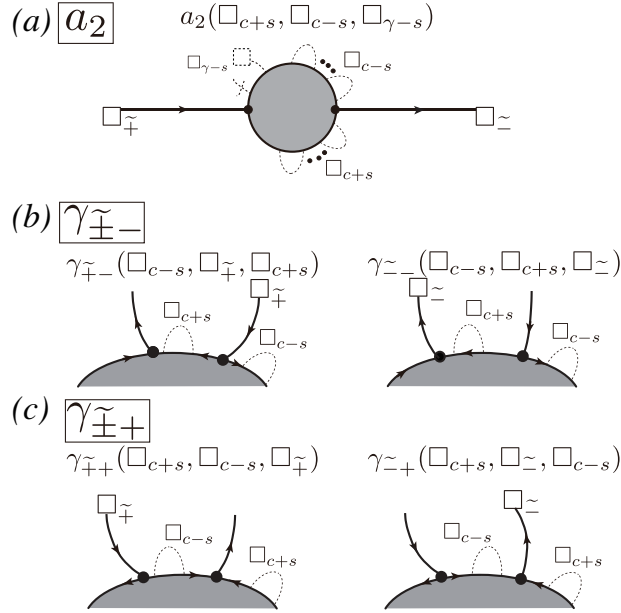}
\end{center}
\caption{Orbit structures with slidable $\partial$-saddles satisfying ($S_5$). (a) An orbit structure having a slidable $\partial$-saddle whose COT symbol
 is given by $a_2(\B_{c+s}, \B_{c-s}, \B_{\gamma-s})$. (b) Orbit structures with a pair of slidable $\partial$-saddles attached in the counter-clockwise direction, whose COT symbols are given by $\gamma_{\widetilde{+}-}(\B_{c-s}, \B_{\widetilde{+}}, \B_{c+s} )$ and $\gamma_{\widetilde{-}-}(\B_{c-s}, \B_{c+s}, \B_{\widetilde{-}})$.
 (c) Orbit structure with a pair of slidable $\partial$-saddles  attached in the counter-clockwise direction, whose COT symbols are given by $\gamma_{\widetilde{+}+}(\B_{c+s}, \B_{c-s}, \B_{\widetilde{+}})$ and $\gamma_{\widetilde{-}+}(\B_{c+s}, \B_{\widetilde{-}}, \B_{c-s})$.}
\label{fig:S5_structures}
\end{figure}

{\bf [Slidable $\partial$-saddles ($S_5$): $a_2$, $\gamma_{\widetilde{\pm}\pm}$]} 
 Since a pair of $\partial$-saddles satisfying ($S_5$)  connects to a source and a sink structure,  we have a slidable $\partial$-saddle. 
 Since we can attach any number of pairs of slidable $\partial$-saddles  to the solid boundary, we choose a special one so that no slidable 
 $\partial$-saddles exist in the lower part of the boundary, which yields an orbit structure as in Figure~\ref{fig:S5_structures}(a). 
  By construction,  any number of  class-$c_+$ orbit structures are attached to the lower part of the boundary, whereas any number of class-$\gamma_-$ and class-$c_-$ orbit structures exist on the upper part. Hence, reading those structures in the counter-clockwise direction along the boundary, we provide the COT symbol $a_2(\B_{c+s}, \B_{c-s}, \B_{\gamma-s})$. It is a class-$a$ orbit structure.

 The orbit structures with an ($S_5$) $\partial$-saddle are all slidable $\partial$-saddles. 
Suppose that slidable $\partial$-saddles are attached to the boundary in $\beta_{\emptyset 2}$ of Figure~\ref{fig:Fundamental_structures}(c), $c_{2\pm}$ of Figure~\ref{fig:S4_structures}(c), and $a_2$ of Figure~\ref{fig:S5_structures}(a). 
 If an attached slidable $\partial$-saddle is of Figure~\ref{Fig05:SS}(c) and Figure~\ref{fig:S4_structures}(c) (resp. of Figure~\ref{fig:S4_structures}(b)), the COT symbol is given by $\gamma_{\widetilde{+}+}(\B_{c+s}, \B_{c-s}, \B_{\widetilde{+}})$ and $\gamma_{\widetilde{-}+}(\B_{c+s}, \B_{\widetilde{-}}, \B_{c-s})$ (resp. $\gamma_{\widetilde{+}-}(\B_{c-s}, \B_{\widetilde{+}}, \B_{c+s} )$ and $\gamma_{\widetilde{-}-}(\B_{c-s}, \B_{c+s}, \B_{\widetilde{-}})$). 
They are class-$\gamma_\pm$ orbit structures.

\subsection{Decomposition of flows of finite type}
 Let $G$ denote the union of non-trivial circuits in $\mathrm{Bd}(v)$ such as cycles and orbits with ($\partial$-)saddle separatrices in Figure~\ref{Fig08:NLC}.
 Precisely, in terms of flow components with COT symbols, it consists of cycles $p_\pm$ and $p_{\widetilde{\pm}}$, orbits having a self-connected saddle separatrix
 $a_\pm$ and $q_\pm$, orbits having two self-connected saddle separatrices $b_{\pm\pm}$ and $b_{\pm\mp}$, orbits generated by $a_2$ with self-connected 
 $\partial$-saddle separatrices $c_\pm$ and $c_{2\pm}$, and orbits along solid boundaries $\beta_\pm$ with/without self-connected $\partial$-saddle separatrices. 
  According to Theorem~\ref{thm:flowbox}, any non-trivial circuit in $\mathrm{Bd}(v)$ is a boundary of trivial flow boxes, transverse annuli,  and periodic annuli. 
 We then introduce the following subset of $G$.
 \begin{definition}
 $\Gamma_p$ denotes the union of non-closed orbits $\gamma$, satisfying 
 \begin{itemize}
 \item $\gamma \subset \Gamma_p$ is a border orbit in the local orbit structures with the COT symbol, $b_{\pm\pm}$, $b_{\pm\mp}$, or $\beta_\pm$ with/without self-connected $\partial$-saddle separatrices,
 \item all flow components  in $(\mathrm{Bd}(v))^c$ separated by $\gamma \subset \Gamma_p$ are periodic annuli.
 \end{itemize}
 \end{definition}
The following operation decomposes the flow domain $S$ into several disjoint open domains whose boundary orbits are
 non-trivial circuits in $\Gamma=G\setminus \Gamma_p$. 
 \begin{definition}(Metric completion) For a flow of finite type $v$ on a surface $S$,
The metric completion of the domain $S_0=S\setminus \Gamma$ for a flow of finite type $v$ on a surface $S$ is the completion of each domain in $S_0$ with keeping their metric unchanged.
\end{definition}
 Let $S_{\mathrm{me}}$ denote  the metric completion of $S_0$. Since $S_0$ consists of several open domains separated by $\Gamma$, 
 $S_{\mathrm{me}}$ becomes the union of the completed closed domains by definition. For convenience, the new boundary set of $S_{\mathrm{me}}$ is denoted by $\partial$.
 Then, we can define a canonical map $\pi: S_{\mathrm{me}} \to S$ such that $\pi\vert_{S_0}$ is the identity 
 and $\pi \vert_\partial \colon \partial \rightarrow \Gamma$ becomes an immersion for the border orbits in $\Gamma$. 
 While the inverse map $\pi\vert_{S_0}^{-1}$ is well-defined,  the map $\pi$ can be reconstructed from the image as long as the composition of the disjoint open domains in $S_0$ is known.
 In addition, the map $\pi\vert_{S_0}^{-1}$  induces a flow $v_{\mathrm{me}}$ on $S_{\mathrm{me}}$, which is identical to
 the original flow $v$ on $S_0$, and the flow orientation along the non-singular orbits in $\Gamma$ coincides with that along non-singular orbits in the boundary $\partial$.

 Next, we define a map collapsing each boundary component in $\partial$ into a singular orbit.
\begin{definition}(Boundary collapse) 
A map $\phi \colon S_{\mathrm{me}} \to S_{\mathrm{mc}}$ is defined by a surjective collapsing each boundary component in $\partial \subset S_{\mathrm{me}}$ to a singular orbit. 
\end{definition} 
By construction,
 the flow $v_{\mathrm{mc}}$ contains no limit circuits and the restriction of the flow on $\phi(\pi^{-1}(S_0))$ is topologically equivalent 
 to the flow $v$ on $S_0$.  We note that the inverse of $\phi$ is reconstructed from the image
  if we know the information on the boundary components in $\Gamma$ and the composition of the domains separated by the boundaries. Let us again recall that
 the flow $v_{\mathrm{me}}$ in the collar of $\gamma \subset \partial$  is either a transverse
 annulus, a periodic annulus, or a trivial flow box. If $\gamma \subset \partial$ is the boundary of a transverse annulus, the singular orbit $\phi(\gamma)$ becomes a source/sink, which gives rise to a gradient flow with non-degenerate singular
 points on $S_{\mathrm{mc}}$.  When $\gamma \subset \partial$ is the boundary of  a periodic annulus, the singular orbit $\phi(\gamma)$ 
 becomes a  center.  The restriction of $v_{\mathrm{mc}}$ is then a Hamiltonian flow, in which all singular orbits and all separatrices are self-connected.
 Hence, by Theorem 2.3.8, p. 74 \cite{MaWa05}, it is topologically equivalent to a structurally stable Hamiltonian flow on $S_{\mathrm{mc}}$. Finally when the flow
  in the collar of $\gamma \subset \partial$ is a trivial flow box, $\phi(\gamma)$ becomes a degenerate singular orbit. 
  More specifically, when $\pi(\gamma)$ is the self-connected
  saddle connection, which is the boundary orbit of the outer (resp. the inner) domain of orbit structure $a_\pm$ (resp. $q_\pm$), or is the boundary orbit of the outer domain
  of orbit structure $a_2$ with self-connected $\partial$-saddle separatrices, $\phi(\gamma)$ becomes a $0$-saddle. On the other hand, when $\pi(\gamma)$ is the
   boundary orbit of the inner domain 
  of an orbit structure $c_{2\pm}$, $\phi(\gamma)$ becomes a multi-saddle whose multiplicity is determined by the number of sources/sinks contained in the domain.
  In this case, the flow $v_{\mathrm{me}}$ on the domain induced by the map $\phi$ is a gradient flow with a degenerate singular orbit.
   Note that, in both cases, the flow $v_{\mathrm{mc}}$ has no homoclinic
  saddle separatrices, since any trivial box cannot contain saddles in its interior. Accordingly, the argument has shown the following decomposition of the flow of finite type.

\begin{proposition}\label{pr:decomposition}
Let $v$ be a flow of finite type $v$ on a surface $S$, and $v_{\mathrm{mc}}$ be the flow on the surface $S_{\mathrm{mc}}$ induced by the metric completion and the boundary 
collapse.
Then there exists a unique decomposition $S_{\mathrm{mc}} = S_g \sqcup S_H$, in which $S_g$ and $S_H$ are disjoint unions of connected components of $S_{\mathrm{mc}}$ 
such that $v_{\mathrm{mc}}\vert_{S_g}$ are gradient flows with non-degenerate singular orbits/multi-saddles and no homoclinic separatrices, 
and $v_{\mathrm{mc}}\vert_{S_H}$ are structurally stable Hamiltonian flows up to topological equivalence. 
\end{proposition}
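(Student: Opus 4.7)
My plan is to verify the decomposition by closely following the construction of $v_{\mathrm{mc}}$ preceding the statement and by applying the topological structure theorem (Theorem~\ref{thm:flowbox}) together with Ma--Wang's classification of structurally stable Hamiltonian flows. First, I would recall that under the metric completion and the boundary collapse $\phi\circ\pi^{-1}$, every non-trivial limit circuit in $\Gamma=G\setminus\Gamma_p$ has been collapsed to a singular orbit, so $v_{\mathrm{mc}}$ has no limit circuits outside those already internal to a periodic-annulus region. Next, for each boundary component $\gamma\subset\partial$ of $S_{\mathrm{me}}$, Theorem~\ref{thm:flowbox} forces the collar of $\gamma$ in $v_{\mathrm{me}}$ to be a periodic annulus, a transverse annulus, or a trivial flow box; correspondingly, $\phi(\gamma)$ becomes a non-degenerate center, a ($\partial$-)source/sink, or a (possibly multi-)saddle singular orbit, exactly as spelled out in the discussion preceding the statement.

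Second, I would define the decomposition at the level of connected components of $S_{\mathrm{mc}}$. Let $S_g$ be the union of those components that contain at least one collapsed boundary arising from a transverse annulus or from a trivial flow box, and let $S_H$ be the union of the remaining components, whose collapsed boundaries come solely from periodic annuli. On any component $C\subset S_H$, every singular orbit is a non-degenerate center or saddle (by condition~$(2)$ of a flow of finite type), and every saddle separatrix is self-connected (by condition~$(4)$ together with the fact that only $\Gamma_p$-type borders survive inside $C$); hence, by Theorem~2.3.8 of~\cite{MaWa05}, $v_{\mathrm{mc}}|_C$ is topologically equivalent to a structurally stable Hamiltonian flow. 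On each component $C\subset S_g$, the collapsed singular orbits include sources, sinks, or (multi-)saddles, and no homoclinic saddle separatrix survives the collapse because a trivial flow box carries no saddle in its interior; combined with the absence of limit circuits on $C$, this identifies $v_{\mathrm{mc}}|_C$ with a gradient flow having non-degenerate singular orbits and, where needed, multi-saddles.

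Uniqueness is automatic, since connected components of $S_{\mathrm{mc}}$ are intrinsic and the defining criterion --- the presence or absence of a source, sink, or multi-saddle --- is a flow invariant of each component. The step I expect to be the main obstacle is ruling out ``mixed'' components, i.e.\ showing that no single connected component of $S_{\mathrm{mc}}$ contains both a collapsed center and a collapsed source/sink/multi-saddle. This should reduce to the observation that a mixed component would have to contain an interior border orbit separating a periodic annulus from a transverse annulus or a trivial flow box; but by the very definition of $\Gamma_p$ any such border orbit lies in $\Gamma$ and has therefore been collapsed, contradicting the supposed interiority.
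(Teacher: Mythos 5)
Your overall strategy coincides with the paper's: the proposition is proved there by the discussion immediately preceding its statement, which classifies the collar of each new boundary component via Theorem~\ref{thm:flowbox}, identifies the collapsed singular orbit (a center, a source/sink, or a $0$-/multi-saddle) in each of the three cases, invokes Theorem~2.3.8 of \cite{MaWa05} for the components filled with periodic orbits, and observes that a trivial flow box carries no saddle in its interior, so no homoclinic separatrix survives on the gradient components. Your explicit treatment of uniqueness and of the ``no mixed components'' issue elaborates a point the paper leaves implicit, and your reason for it is the right one: any circuit separating a periodic annulus from a non-periodic domain violates the defining condition of $\Gamma_p$, hence lies in $\Gamma$ and has been removed and collapsed.

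There is, however, a genuine defect in how you define the partition. You place a component into $S_H$ whenever its collapsed boundaries ``come solely from periodic annuli,'' a condition that is vacuously satisfied by a component having no collapsed boundary at all. Such components can carry gradient dynamics: take $v$ on $\mathbb{S}^2$ with a single source, a single sink, and all other orbits non-closed (root structure of type $\sigma_{\emptyset\widetilde{\mp}0}$ with no limit circuits), or a $\beta_{\emptyset 2}$ configuration whose only distinguished structures are slidable $\partial$-saddles. In these cases $G=\Gamma=\emptyset$, so $S_{\mathrm{mc}}=S$ is a single component with no collapsed boundary; your rule assigns it to $S_H$, yet it is a source--sink gradient flow, and your subsequent assertion that every singular orbit on a component of $S_H$ is a non-degenerate center or saddle fails for it --- condition (2) of finite type gives only non-degeneracy, not the absence of sources and sinks. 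The partition must be defined intrinsically by the flow on each component, e.g.\ $S_H$ is the union of those components all of whose two-dimensional pieces in the sense of Theorem~\ref{thm:flowbox} are periodic annuli (equivalently, containing no source/sink structure and no non-closed orbits other than self-connected saddle separatrices), and $S_g$ is the union of the rest. With that correction your mixed-component argument shows the two cases are exclusive and exhaustive, and the remainder of your verification goes through exactly as in the paper.
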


\begin{figure}
\begin{center}
\includegraphics[scale=0.45]{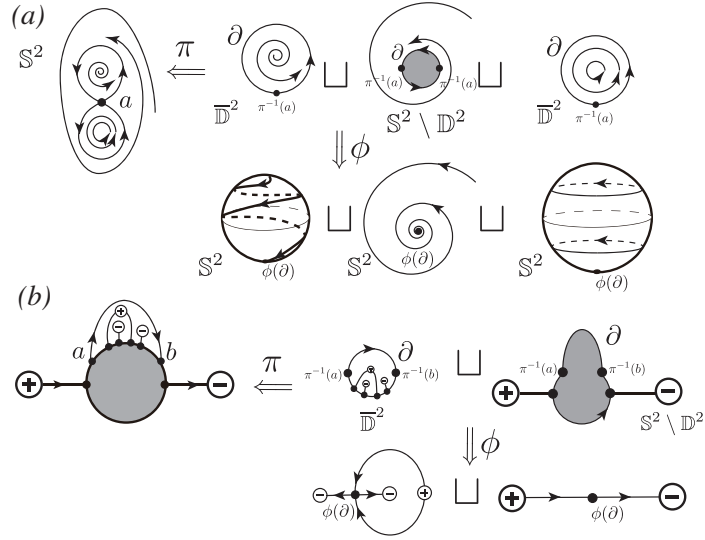}
\end{center}
\caption{(a) The images of  an orbit structure with two self-connected saddle connections ($b_{++}$) by the maps $\pi$ and $\phi$.
(b) The images of a non-trivial circuit ($a_2$) with a self-connected $\partial$-saddle ($c_{2-}$) enclosing two sinks and one source.}
\label{fig:linking_example}
\end{figure}

For instance, suppose that a non-trivial circuit $\gamma \subset \Gamma_g$ is a border orbit with the COT symbol $b_{++}$ among a transverse annulus outside, a transverse, and a periodic annulus inside as shown in Figure~\ref{fig:linking_example}(a).
  By the metric completion, the flow on $\mathbb{S}^2\setminus \gamma$ is then
divided into an attracting transverse annulus around the boundary on $\mathbb{S}^2\setminus\mathbb{D}^2$, a repelling transverse annulus in a closed disk $\overline{\mathbb{D}}^2$
and a periodic annulus on a closed disk $\overline{\mathbb{D}}^2$. By definition, the pre-images $\pi^{-1}(a)$ are contained in every boundary
of the domains as $\partial$-0-saddles. The boundary collapse $\phi$ gives rise to gradient flows consisting of sources and sinks on the sphere $\mathbb{S}^2$ and a 
structurally stable Hamiltonian flow filled with periodic orbits around two centers on $\mathbb{S}^2$. Figure~\ref{fig:linking_example}(b) shows
an orbit structure represented by $a_2$ with a self-connected $\partial$-saddle separatrix $c_2$ enclosing two sinks and one source structures.
The self-connected $\partial$-saddle separatrix $\gamma$ between the $\partial$-saddles at $a$, $b$ separates the flow domain into two trivial
 flow boxes. By the metric completion, the flow 
inside of $\gamma$ becomes a flow on a closed disk $\overline{\mathbb{D}}^2$ having  slidable $\partial$-saddles and the pre-images $\pi^{-1}(a)$
and $\pi^{-1}(b)$ as $\partial$-$0$-saddles. On the other hand, the flow outside of $\gamma$ is converted into a trivial flow box
whose boundary has the $0$-saddles $\pi^{-1}(a)$ and $\pi^{-1}(b)$. These flows are finally mapped into gradient flows with a $1$-saddle  and a $0$-saddle on $\mathbb{S}^2$
by the boundary collapse.

\begin{figure}
\begin{center}
\includegraphics[scale=0.5]{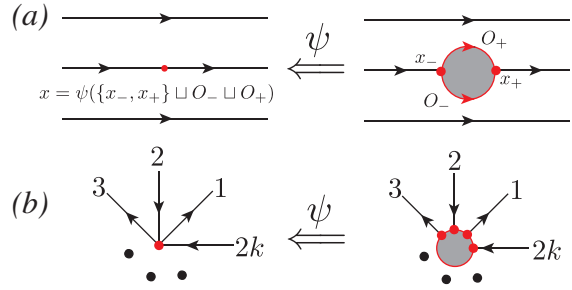}
\end{center}
\caption{(a) A blowdown map $\psi$ for a flow with one $0$-saddle $x$. It is defined by replacing a boundary component $\{x_-, x_+ \} \sqcup O_- \sqcup O_+$ consisting of two $\partial$-saddles $x_-, x_+$ and two separatrices $O_-$, $O_+$ between them with the $0$-saddle $x$. (b) A blowdown map $\psi$ for a flow with one $2k$-saddle. It replaces a boundary component which consists of $2(k+1)$ $\partial$-saddles and $2(k+1)$ separatrices with the $2k$-saddle}
\label{fig:blowup}
\end{figure}
To obtain the topological classification of flows of finite type, it is necessary to convert the set of gradient flow components on $S_g$ into that of Morse flows by 
{\it smoothing out} the degenerate singular orbits with the following operations.
\begin{definition} (Blowup/blowdown)
Let $A \subset \mathrm{int}S$ denote the set of multi-saddles that are not saddles for a flow $v$ on a surface $S$. Then a flow $w$ on a surface $T$ is a {\it blowup} of $v$ with respect to multi-saddles in $A$, if there are a surjection $\psi \colon T \to S$ and  a union of boundary components $B$ of $T$ consisting of $\partial$-saddles and  separatrices 
satisfying the following conditions.
\begin{itemize}
\item  $\psi\vert_{T \setminus B}: T \setminus B \to S \setminus A$ is a homeomorphism, and $v\vert_{S \setminus A}$ is topological equivalent to $w\vert_{T \setminus B}$ by $\psi$;
\item the inverse image of any degenerate multi-saddle of $v$ by $\psi^{-1}$ is a boundary component in $B$. 
\end{itemize}
\end{definition}
 The map $\psi\colon S_{\mathrm{mb}} \to S_{\mathrm{mc}}$ and its inverse $\psi^{-1}$ are well-defined, which are referred to as the {\it blowdown} and the {\it blowup} of $v$
 respectively. As shown in Figure~\ref{fig:blowup}, the blowup operation replaces 
 each $k$-saddle ($k \neq 1 \in \mathbb{Z}_{\geq 0}$) with a boundary component consisting of $2(k+1)$ $\partial$-saddles and $2(k+1)$ free separatrices.
 Gradient flows with degenerate saddles are converted into Morse flows using the blowup. We thus show the decomposition theorem of a flow of finite type on a surface into structurally
 stable Hamiltonian flows and Morse flows. 

\begin{theorem}\label{th:decomposition}(Decomposition theorem)
 Let $v_{\mathrm{mb}}$ be a flow on the surface $S_{\mathrm{mb}}$ induced by $\psi^{-1}\circ \phi\circ \pi^{-1}$ from the flow of finite type $v$ on a surface $S$.
 Then, there exists a unique decomposition $S_{\mathrm{mb}} = S_M \sqcup S_H$, in which $S_M$ and $S_H$ are disjoint unions of the connected components of $S_{\mathrm{mb}}$,
 such that $v_{\mathrm{mb}}|_{S_M}$ are Morse flows and $v_{\mathrm{mb}}|_{S_H}$ are structurally stable Hamiltonian flows. 
\end{theorem}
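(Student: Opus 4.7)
The plan is to derive Theorem \ref{th:decomposition} as a direct refinement of Proposition \ref{pr:decomposition} by tracing what the blowup $\psi^{-1}$ does to each of its two pieces. By Proposition \ref{pr:decomposition}, the flow $v_{\mathrm{mc}} = \phi\circ\pi^{-1}(v)$ already decomposes as $v_{\mathrm{mc}}|_{S_g}\sqcup v_{\mathrm{mc}}|_{S_H}$, where $v_{\mathrm{mc}}|_{S_H}$ is a structurally stable Hamiltonian flow whose only singular orbits are non-degenerate centers and ordinary self-connected saddles, while $v_{\mathrm{mc}}|_{S_g}$ is a gradient flow whose only possibly degenerate singular orbits are the $2k$-saddles ($k\neq 1$) produced by $\phi$ when it collapses a boundary coming from a circuit of type $a_\pm$, $q_\pm$, $a_2$, or $c_{2\pm}$. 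In particular, the set $A$ of non-standard multi-saddles to which $\psi$ is applied sits entirely inside $S_g$, so $\psi$ restricts to a homeomorphism on the preimage of $S_H$. I therefore define $S_H\subset S_{\mathrm{mb}}$ to be that preimage (so that $v_{\mathrm{mb}}|_{S_H}$ is literally a copy of $v_{\mathrm{mc}}|_{S_H}$, hence a structurally stable Hamiltonian flow) and set $S_M:=\psi^{-1}(S_g)$.

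Having fixed this decomposition, I need only verify that $v_{\mathrm{mb}}|_{S_M}$ is a Morse flow, and I would do so by checking the Morse--Smale axioms recalled in Section 3 with ``no periodic orbits'' standing in for condition (2). Hyperbolicity of the singular orbits is immediate from Figure \ref{fig:blowup}: each degenerate $2k$-saddle is replaced by a boundary circle of $2(k+1)$ hyperbolic $\partial$-saddles linked by $2(k+1)$ separatrices, while every interior sink, source, and slidable saddle already present in $v_{\mathrm{mc}}|_{S_g}$ is left untouched and remains hyperbolic. The gradient property survives the blowup, so axiom (2) holds vacuously. Finally, every $\omega$- and $\alpha$-limit set is a closed orbit by Lemma \ref{lem51} combined with the remark, immediately following Proposition \ref{pr:decomposition}, that $v_{\mathrm{mc}}$ carries no limit circuits on $S_g$.

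The main obstacle is the remaining axiom (3), that $v_{\mathrm{mb}}|_{S_M}$ has no saddle-to-saddle connection off the boundary. I plan to dispatch this by case analysis against Table \ref{tbl:All_structures}: interior saddles of $v_{\mathrm{mc}}|_{S_g}$ are only the slidable saddles $a_{\widetilde{\pm}}$, $q_{\widetilde{\pm}}$, $a_2$, and $\gamma_{\widetilde{\pm}\pm}$, whose separatrices by definition terminate at sink or source structures, while each free separatrix created by the blowup of a $2k$-saddle can only enter the trivial flow box sitting on the opposite side of the collapsed circuit, whose only interior singularities are hyperbolic sinks and sources by the ``no homoclinic saddle separatrices'' clause built into Proposition \ref{pr:decomposition}. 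Uniqueness of the decomposition is then automatic: it refines the partition of $S_{\mathrm{mb}}$ into connected components, and each component is intrinsically either Hamiltonian (it contains centers and periodic orbits) or Morse (it is gradient with only hyperbolic singularities), and these two dynamical types are mutually exclusive.
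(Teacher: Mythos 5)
Your proposal is correct and follows essentially the same route as the paper: invoke Proposition~\ref{pr:decomposition}, observe that the blowdown $\psi$ is the identity over $S_H$, set $S_M:=\psi^{-1}(S_g)$, and check that the blowup turns the degenerate multi-saddles into boundary components of $\partial$-saddles so that the gradient components become Morse flows. The paper's own proof is just a terser version of this (it asserts in one sentence that after the blowup there are no saddle separatrices in $\mathrm{int}\,S_M$, where you supply the case analysis), so the two arguments differ only in the level of detail, not in substance.
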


\begin{proof}
Proposition~\ref{pr:decomposition} implies that there is a unique decomposition $S_{\mathrm{mc}} = S_g \sqcup S_H$ such that $v_{\mathrm{mc}}\vert_{S_g}$ are gradient flows with 
 non-degenerate singular orbits or with multi-saddles but no homoclinic separatrices, and $v_{\mathrm{mc}}\vert_{S_H}$ are structurally
 stable Hamiltonian flows. By definition, the blowdown $\psi \colon S_{\mathrm{mb}} \to S_{\mathrm{mc}}$ is the identity map on $S_H$. By the blowup of flows on $S_g$ with 
 multi-saddles in $\mathrm{int} S_g$, the restriction to $\psi(S_g)$ of the flow $v_{\mathrm{mb}}$ have $\partial$-saddles but no self-connected saddle separatrices in $\mathrm{int} S$.
  This implies that flow $v_{\mathrm{mb}}$ on $S_M:=\psi^{-1}(S_g)$ consists of Morse flows. We thus have the decomposition $S_{\mathrm{mb}} = S_M \sqcup S_H$ as desired. 
\end{proof}

\subsection{Discrete representations of topological structures for flows of finite type}
According to Theorem~\ref{th:decomposition}, the flow $v_{\mathrm{mb}}$ on $S_{\mathrm{mb}} = S_M \sqcup S_H$ is decomposed into Morse flow components $v_M$ on $S_M$
 and structurally stable Hamilton flow components $v_H$ on $S_H$. Smale~\cite{S61} proved that Morse flows on closed surfaces are gradient flows without separatrices from a 
 saddle to a saddle, which essentially says that the ss-saddle connection diagram is uniquely assigned to any Morse flow $v_M$. Hence, we characterize the topological orbit 
 structure of any Morse flow components $v_M$ with $L_k(v_M):=D_{ss}(v_M)$. We thus refer to $L_k(v_M)$ as the {\it linking structure}  of $v_M$,
 which are equivalently expressed as surface graphs. On the other hand, it has been shown in \cite{SY18,UYS18} that
  the topological orbit structures of structurally stable Hamiltonian flows are in one-to-one correspondence with the COT representations.
  Hence, a unique combinatorial tree/graph representation of the topological orbit structure for a given flow of finite type $v$ on a surface $S$ can be obtained, in principle, 
  from the linking structures and the COT representations for the flow $v_{\mathrm{mb}}=v_M \sqcup v_H$ induced from $v$ by $\pi\circ \phi^{-1} \circ \psi$.
  However, as discussed in Section~4.2, the maps $\pi$ and
  $\phi^{-1}$ cannot be constructed unless the composition of non-trivial circuits and their neighboring 2D domains are known. We thus extend the COT representation developed for structurally stable Hamiltonian flows to that for flows of finite type to retain the geometric information in the previous section. This is accomplished by providing a conversion algorithm from a flow of finite type into a unique COT representation with a linking structure.

\subsubsection{Conversion algorithm and combinatorial classification theorem}
Let us define a procedure applicable to any local orbit structure $v_p$ having a COT symbol, say $Cot(v_p)$, in Figures~\ref{fig:Saturated_structures}--\ref{fig:S5_structures}. 
 \begin{description}
 \item[Step 1] We identify inner orbit structures, $v_c:=\{ v_c^1, \ldots, v_c^n\}$, embedded in $v_p$ according to the square symbols in $Cot(v_p)$ and Table~\ref{tbl:COT_Structures};
 \item[Step 2] Regarding $v_p$ as a parent node and $v_c^i$, $i=1,\dots, n$ as  child nodes, we connect them by directed edges;
 \item[Step 3] We replace the square symbols in $Cot(v_p)$ with  $Cot(v_c^i)$, $i=1,\dots, n$.
  \end{description}
 Using this procedure, we describe an algorithm providing a unique COT representation to the ss-saddle connection diagram of a given flow of finite type $v$:
 The first step is identifying the root structure of the flow, to which the above-mentioned procedure is applied. We then repeat the procedure recursively to all inner 
 orbit structures until they reach innermost orbit structures such as centers, sources/sinks, and solid boundaries without 
 $\partial$-saddle separatrices. The algorithm certainly terminates owing to the finiteness of orbit structures in the flow of finite type, and it thus
  yields a unique rooted and directed tree represented by a combination of COT symbols. Let us write $Cot(v)$ as the 
 output of this algorithm for a given flow of finite type $v$. Since $Cot(v)$ retains the composition of all orbit structures in $v$ making the maps $\pi$ and $\phi$ well-defined,
  we have the following combinatorial classification theorem for the flows of finite type.

\begin{theorem}\label{th:COT}(Combinatorial classification theorem)
Any flow of finite type $v$ on a surface $S$ is uniquely represented by a pair of the COT representation $Cot(v)$ and the linking structure $L_k(v_M)$, in which $v_M$ denotes the Morse components of $v_{\mathrm{mb}}$ on $S_M$ induced by $\psi^{-1}\circ \phi \circ \pi^{-1}$.
\end{theorem}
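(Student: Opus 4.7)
The plan is to establish the theorem in two directions: well-definedness of the assignment $v \mapsto (Cot(v), L_k(v_M))$, and reconstruction of $v$ from the pair up to topological equivalence. Both rest on the decomposition theorem (Theorem~\ref{th:decomposition}) combined with the two classification results recalled just above it: the COT bijection for structurally stable Hamiltonian flows from~\cite{SY18,UYS18}, and Smale's characterization~\cite{S61} of Morse flows by their ss-saddle connection diagrams.

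For well-definedness I would first check that the conversion algorithm of Section~4.3 terminates and produces a unique output. The root structure of $v$ is dictated by the topology of $S$ and the presence or absence of solid boundaries: one of $\sigma_{\emptyset\pm}$, $\sigma_{\emptyset\widetilde{\pm}0}$, $\sigma_{\emptyset\widetilde{\pm}\pm}$, $\beta_{\emptyset\pm}$, or $\beta_{\emptyset 2}$. The three-step procedure then descends into strictly simpler inner orbit structures whose admissible types are exactly those listed in Tables~\ref{tbl:All_structures} and~\ref{tbl:COT_Structures}. Conditions (2) and (3) of the definition of a flow of finite type ensure that there are only finitely many singular orbits and limit circuits, so the recursion terminates after finitely many steps. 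Uniqueness of the COT symbol at every node follows from the fact that Table~\ref{tbl:All_structures} assigns exactly one symbol to each local orbit structure that can appear. Meanwhile, the linking structure $L_k(v_M) := D_{ss}(v_M)$ is uniquely determined once $v_M$ is, and $v_M$ is produced from $v$ unambiguously by applying $\psi^{-1}\circ\phi\circ\pi^{-1}$ followed by the decomposition $S_{\mathrm{mb}}=S_M\sqcup S_H$ of Theorem~\ref{th:decomposition}.

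For the reconstruction I would invert the construction in three stages. First, from the subtrees of $Cot(v)$ corresponding to structurally stable Hamiltonian pieces, the flow $v_H$ on $S_H$ is recovered by the bijection of~\cite{UYS18}; from the remaining subtrees together with $L_k(v_M)$, the flow $v_M$ on $S_M$ is recovered by Smale's theorem. Second, I would argue that the parenthetic grouping and ordering encoded in each COT symbol -- for instance the distinction between $a_\pm(\B_{b\pm})$ and $q_\pm(\B_{\widetilde{+}}, \B_{\widetilde{-}}, \B_{as})$, or the cyclic arrangement $\{\B_{c+s}, \B_{\widetilde{\pm}}, \ldots\}$ inside $c_{2\pm}$ -- records precisely which 2D domain lies on which side of each border orbit in $\Gamma$, together with the cyclic order of the neighboring domains along every circuit. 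This adjacency data is exactly what is needed to reconstruct the maps $\pi$ and $\phi^{-1}$ as observed in Section~4.2. Third, the composition $\pi \circ \phi^{-1} \circ \psi$ applied to the reconstructed $v_{\mathrm{mb}}$ yields $v$ up to topological equivalence.

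The main obstacle lies in the second stage. One must verify, case by case, that the COT symbols defined in Section~4.1 faithfully encode the gluing data. Concretely, for each border-orbit type ($a_\pm$, $q_\pm$, $b_{\pm\pm}$, $b_{\pm\mp}$, $c_\pm$, $c_{2\pm}$, $a_2$, $\beta_\pm$, and the slidable-saddle and slidable $\partial$-saddle symbols), one checks that the ordered list of child square symbols both identifies the side of the circuit on which each neighboring 2D domain lies and respects the cyclic or linear ordering inherited from the surface orientation. One also needs to check that the distinction between the parentheses $(\,)$ and $\{\,\}$ correctly reflects whether the order is intrinsic or cyclic, so that no spurious ambiguity is introduced in the reconstruction. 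Once this bookkeeping is confirmed, the bijections on each connected component of $S_{\mathrm{mb}}$ combine with the uniquely determined gluing to yield the desired one-to-one correspondence, establishing Theorem~\ref{th:COT}.
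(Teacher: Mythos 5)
Your proposal is correct and follows essentially the same route as the paper, which derives the theorem from the termination and uniqueness of the conversion algorithm, the decomposition theorem, the known classification results for the Morse and structurally stable Hamiltonian components, and the assertion that $Cot(v)$ retains the composition data needed to reconstruct $\pi$ and $\phi$. The case-by-case verification of the gluing data that you flag as the main obstacle is exactly the content the paper leaves implicit in its Section~4.1 catalogue of COT symbols, so your outline matches the paper's argument in both structure and level of detail.
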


\begin{figure}
\begin{center}
\includegraphics[scale=0.55]{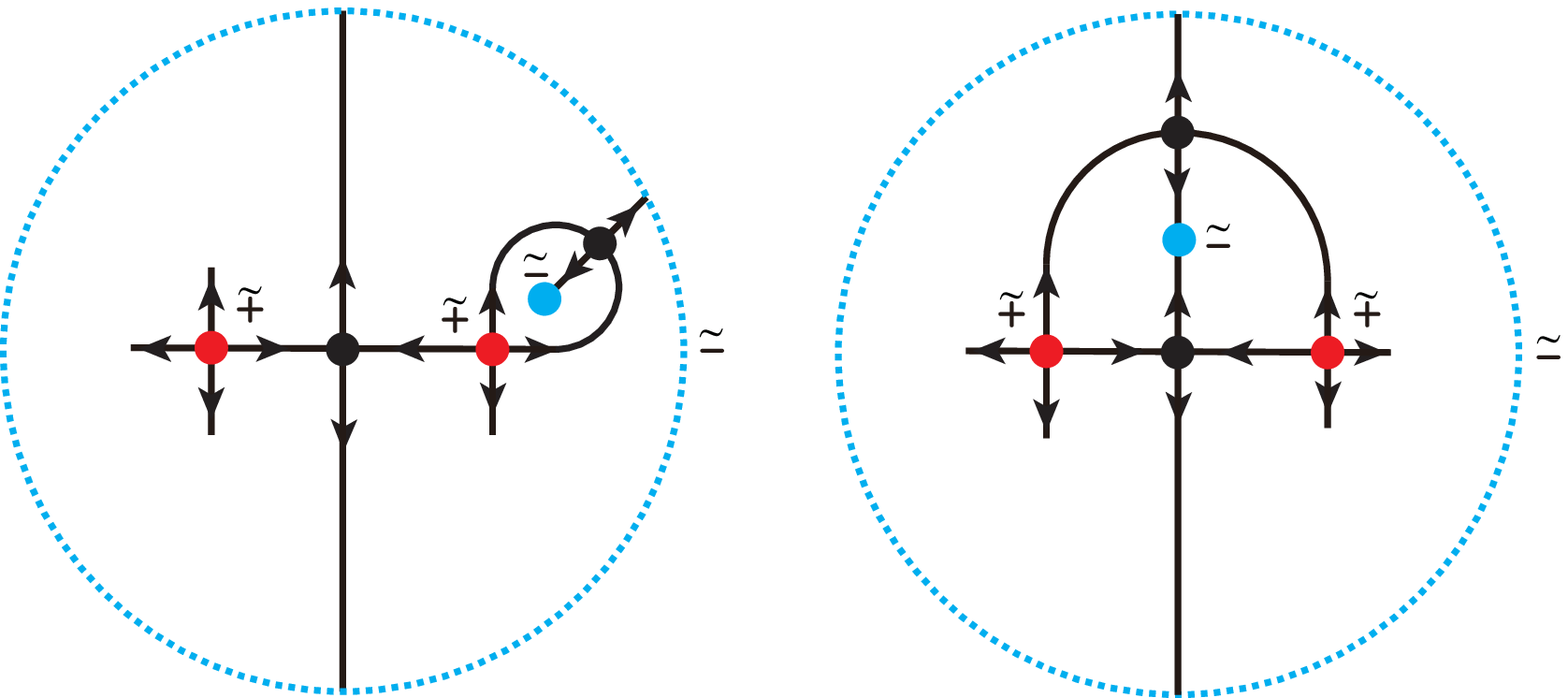}
\end{center}
\caption{Orbit structures with the same COT representation, $\sigma_{\emptyset\widetilde{-}0}(b_{\widetilde{+}}( a_{\widetilde{+}} \{\sigma_{\widetilde{+}0}, \sigma_{\widetilde{+}0}\}, \{ q_{\widetilde{+}}(\sigma_{\widetilde{-}0}) \}))$, but different linking structures.}
\label{fig:Combinatrics-examples}
\end{figure}

 Any structurally stable Hamiltonian flow is uniquely identified with the COT representation only, but this is not the case with the flow of finite type.
 In other words, the linking structure is also a substantial identifier, since there exist different orbit structures with the same COT representation 
as shown in Figure~\ref{fig:Combinatrics-examples}.

\section{Examples}
\label{section:Example}
\subsection{Applications to 2D compressible flows}
The first example is the flow generated by the Van del Pol equation, $\dot{x} = y-x^3+x$ and $\dot{y}=-x$.
Orbit structures generated by the solutions are shown in  Figure~\ref{fig:ss-examples}(a) to which we assign the COT representation. Since the flow domain contains
 no solid boundaries and the point at infinity is a 
source associated with counter-clockwise non-closed orbits, the root structure is $\sigma_{\emptyset\widetilde{+}+}(b_{\widetilde{-}}(\B_{\widetilde{-}},\{\B_{a\widetilde{-}s}\}))$.
 In $\B_{\widetilde{-}}$, a clockwise limit cycle having the COT symbol $p_-(\B_{b-})$ is embedded. No class-$a$ structures exist, i.e., $\B_{a\widetilde{-}s}=\lambda_\sim$.
  Since the domain inside the limit cycle is a repelling transverse annulus from a source at the origin surrounded by clockwise non-closed orbits to the limit cycle, 
  we have $\B_{b-} = b_{\widetilde{+}}(\sigma_{\widetilde{+}-},\{ \lambda_\sim \})$. Consequently, we obtain 
 $\sigma_{\emptyset\widetilde{+}+}(b_{\widetilde{-}}(p_-(b_{\widetilde{+}}(\sigma_{\widetilde{+}-},\{\lambda_\sim\})),\{\lambda_\sim\}))$ as the COT representation. 
 
 The second examples are potential flows with point vortices and source/sink pairs in a bounded domain. 
 The flow domain is inside the unit circle $D_\zeta=\{ \zeta \in \mathbb{C}\, \vert \, \vert \zeta \vert \leqq 1 \}$ containing three solid circular
  boundaries $C_m$,  $m=1,2,3$, whose centers $\delta_m \in \mathbb{C}$ and radii $q_m \in \mathbb{R}$ are given by
\[
\delta_1 = 0.052+0.465i, \; q_1 = 0.15, \; \delta_2 = -0.280-0.628i, \; 
\]
\[
q_2 = 0.18, \; \delta_3 = 0.483-0.281i, \; q_3 = 0.21.
\]
Analytic formulae of the complex potentials of a point vortex and a source-sink pair are described by using a transcendental function called the Schottky Klein prime 
function associated with the circular domain $D_\zeta$~\cite{DC05,DC11}. These complex potentials are computed numerically with the \textit{SKPrime} package~\cite{DGKM16}
on \textsc{Matlab}.

 Figure~\ref{fig:ss-examples}(b) shows streamlines of a potential flow consisting of seven-point vortices. Since the flow contains no ss-components, it
 is also a structurally stable Hamiltonian flow. Let us assign the COT representation to the flow to check the consistency with the existing theories~\cite{SY18,YS12}. We list the locations $v_m$ and the strengths $\kappa_m$ of
 the point vortices for $m=1, \dots, 7$  in Table~\ref{tbl:configurations} for reference. Note that a point vortex with the positive (resp. the negative) strength generates 
 a counter-clockwise (resp. a clockwise) flow in its neighborhood. The root structure of this
 flow is $\beta_{\emptyset-}$ since the flow along the outer boundary is going in the counter-clockwise direction. Hence, its COT representation starts with 
$\beta_{\emptyset-}(\B_{b+}^1, \{ \B_{c-s}^1\} )$. In $\B^1_{b+}$, we embed a periodic annulus whose boundary is a streamline connecting the solid
 boundary $C_2$. The internal orbit structures embedded in  $\B_{c-s}^1$ are two clockwise $\partial$-saddle separatrices attached to the outer boundary, enclosing the two-point vortices
 $v_1$ and $v_2$ with the strengths $\kappa_1=-1$ and $\kappa_2=-0.5$. We thus have 
 \[
\B_{b+}^1 = b_+(\beta_+\{ \B_{c+s}^2 \}), \qquad \B_{c-s}^1 = c_-(b_-(\sigma_-), \lambda_+) \cdot c_-(b_-(\sigma_-), \lambda_+).
 \]
The boundary $C_2$ has three self-connected $\partial$-saddle separatrices. The first one is enclosing the point vortex $v_3$ with the  strength $\kappa_3=0.5$ on its righthand side, and 
the second one contains a streamline connecting the solid boundary $C_1$ and the third clockwise $\partial$-saddle connection is attached to the boundary $C_2$ 
around the point vortex $v_4$ with the  strength $\kappa_4=-0.5$. Hence, we obtain
 \[
 \B_{c+s}^2 =c_+(b_+(\sigma_+), \lambda_-) \cdot c_+(b_+(\beta_+\{\B_{c+s}^3 \})) \cdot c_-(b_-(\sigma_-), \lambda_+).
 \]
 In addition, the $\partial$-saddle separatrix on the solid boundary $C_1$ contains a figure-eight $b_{++}$ orbit structure, in which we embed a $b_{+-}$ orbit structure
 enclosing the two point vortices $v_5$ and $v_6$, and a counter-clockwise $\partial$-saddle separatrix on the solid boundary $C_3$ enclosing the point
  vortex $v_7$ with the  strength $\kappa_7=1$.  The COT representation of the streamline pattern in Figure~\ref{fig:ss-examples}(b) finally becomes
\begin{eqnarray*}
&& \beta_{\emptyset-}(b_+(\beta_+\{ c_+(b_+(\sigma_+), \lambda_-) \cdot c_+(b_+(\beta_+\{\B_{c+s}^3 \})) \cdot c_-(b_-(\sigma_-), \lambda_+)\}), \\
&& \{ c_-(b_-(\sigma_-), \lambda_+) \cdot c_-(b_-(\sigma_-), \lambda_+)\} ),
 \end{eqnarray*}
in which $\B_{c+s}^3 = c_+(b_+(b_{++}\{ b_+(b_{+-}(b_+(\sigma_+), b_-(\sigma_-)), b_+(\beta_+\{ c_+(b_+(\sigma_+), \lambda_-) \} )\}))$.

\begin{figure}
\begin{center}
\includegraphics[scale=0.42]{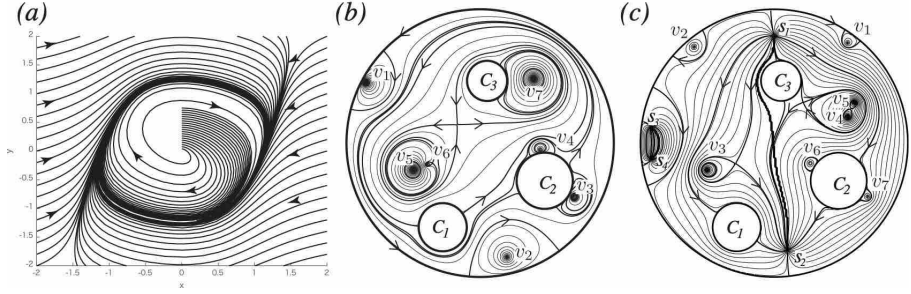}
\end{center}
\caption{(a) Orbit structure generated by the Van-del-Pol equation. (b) Streamlines of a Hamiltonian vector field with seven point vortices at $v_m$, $m=1,2, \dots, 7$ 
in the unit circle. (b) Streamlines of a flow of finite type with two source-sink pairs  $(s_1, s_2)$ and $(s_3, s_4)$ and seven point vortices $v_m$, $m=1,2,\dots, 7$ in 
the unit circle. The locations and the strengths of the point vortices and the source-sink pairs are listed in Table~\ref{tbl:configurations}.}
\label{fig:ss-examples}
\end{figure}

\begin{table}
\caption{Locations and strengths of point vortices and source-sink pairs  in Figure~\ref{fig:ss-examples}(b, c).}
\begin{center}
\begingroup
\scalefont{0.89}
\begin{tabular}{|l|l||l|l|}\hline
Fig.~\ref{fig:ss-examples}(b)  & Strength & Fig.~\ref{fig:ss-examples}(c)  &Strength \\ \hline\hline
$v_1=-0.85+0.45i$ & $\kappa_1=-1$ & $v_1=0.55+0.75i$ & $\kappa_1=0.5$ \\ \hline
$v_2=0.20-0.85i$ & $\kappa_2=-0.5$  & $v_2=0.60+0.72i$ & $\kappa_2=-0.5$\\ \hline
$v_3=0.70-0.40i$ & $\kappa_3=0.5$ & $v_3=-0.5-0.2i $ & $\kappa_3=1$\\ \hline
$v_4=0.45-0.05i$ & $\kappa_4=-0.5$  & $v_4=0.55+0.20i$ & $\kappa_4=-1.0$\\ \hline
$v_5=-0.50-0.20i$ & $\kappa_5=1.0$  & $v_5=0.60+0.30i$ & $\kappa_5=-1.0$\\ \hline 
$v_6=-0.40-0.16i$ & $\kappa_6=-0.25$ & $v_6=0.27-0.15i$ & $\kappa_6=0.5$ \\ \hline
$v_7=0.40+0.48i$ & $\kappa_7=1$ & $v_7=0.70-0.40i$ & $\kappa_7=-0.5$ \\ \hline
& & $s_1=-0.00+0.80i$, $s_2=0.10-0.80i$ & $m_1=1$ \\ \hline
& & $s_3=0.92+0.12i$, $s_4=-0.92-0.12$& $m_2=1$\\ \hline
\end{tabular}
\endgroup
\end{center}
\label{tbl:configurations}
\end{table}

Let us next consider a flow of finite type having source-sink pairs whose root structure is  $\beta_{\emptyset 2}$. Figure~\ref{fig:ss-examples}(c) is the streamlines 
 of a potential flow with seven point vortices  $v_m$ with strengths $\kappa_m$, $m=1,2,\dots, 7$  and two source-sink pairs $(s_1, s_2)$ and $(s_3, s_4)$ with strength one.
 Their locations and strengths are listed in Table~\ref{tbl:configurations}.  The COT representation of this flow is initially given by
\[
 \beta_{\emptyset 2} (\{ \B_{c+s}^1, \B_{\widetilde{-}}^1, \B_{c-s}^1, \B_{\widetilde{+}}^1, \B_{\gamma + s}^1 \}, \B_{as}^1 ).
\]
 We then determine the internal structures corresponding the square symbols. Since there exist the source  $s_1$ associated with non-rotating non-closed orbit and the sink $s_2$ in the root
  structure, we have $\B_{\widetilde{+}}^1=\sigma_{\widetilde{+}0}$
 and $\B_{\widetilde{-}}^1 = \sigma_{\widetilde{-}0}$. On the right hand side of the source-sink pair, there is one self-connected $\partial$-saddle separatrix
  enclosing the point vortex $v_1$ with the  
 strength $\kappa_1=0.5$, and no class-$\gamma_+$ orbit structures exist. Hence it yields $\B_{c+s}^1=c_+(b_+(\sigma_+), \lambda_-)$ and 
 $\B^1_{\gamma + s} = \lambda_\sim$. On the lefthand side, we see two clockwise $\partial$-saddle connections attached to the outer boundary. 
 One is enclosing the point vortex $v_2$ with the  strength $\kappa_2=-0.5$, and the other one contains the source-sink pair $(s_3, s_4)$ without any special orbit structure
  belonging to class-$c_{\pm}$, class-$\gamma_+$ and class-$a$. These structures are thus represented by 
 \[
 \B_{c-s}^1 = c_{2-}( \lambda_+, \sigma_{\widetilde{-}0},  \lambda_-, \sigma_{\widetilde{+}0}, \lambda_\sim,  \lambda_+, \lambda_\sim ) \cdot c_-(b_-(\sigma_-), \lambda_+).
 \]
 We finally identify class-$a$ orbit structures in $\B_{as}^1$ connecting the source-sink pair $(s_1, s_2)$. Checking the structures from the left, we find the orbit 
 structure $a_2$ connecting the solid boundary $C_1$,  $a_+$ enclosing the point vortex $v_3$ with the  strength $\kappa_3=1$, $a_2$ connecting the solid boundary $C_3$, $a_-$ enclosing a 
 figure-eight $b_{--}$ orbit structure consisting of the point vortices $v_4$ and $v_5$ with the  strengths $\kappa_4=\kappa_5=-1$, and $a_2$ connecting the solid boundary $C_2$ with the two $\partial$-saddle connections around the point vortices $v_6$ and $v_7$. Hence, we have 
 \begin{eqnarray*}
 \B_{as}^1 &=& a_2(\lambda_+, \lambda_-, \lambda_\sim) \cdot a_+(b_+(\sigma_+)) \cdot a_2(\lambda_+, \lambda_-, \lambda_\sim)  \\
  & & \cdot a_-(b_-(b_{--}\{ b_-(\sigma_-), b_-(\sigma_-) \})) \cdot a_2(c_+(b_+(\sigma_+), \lambda_-), c_-(b_-(\sigma_-), \lambda_+)).
 \end{eqnarray*}
 Plugging all symbols into the root structure $\beta_{\emptyset 2}$,  we obtain 
 \begin{eqnarray*}
 &&\beta_{\emptyset 2} (\{ c_+(b_+(\sigma_+)), \sigma_{\widetilde{-}0}, c_{2-}(\sigma_{\widetilde{-}0}, \sigma_{\widetilde{+}0}) \cdot c_-(b_-(\sigma_-)), \sigma_{\widetilde{+}0} \}, a_2 \cdot a_+(b_+(\sigma_+)) \\
 && \qquad \cdot a_2 \cdot a_-(b_-(b_{--}\{ b_-(\sigma_-), b_-(\sigma_-) \})) \cdot a_2(c_+(b_+(\sigma_+),) c_-(b_-(\sigma_-)))),
 \end{eqnarray*}
 in which we abbreviate symbols $\lambda_\pm$ and $\lambda_\sim$ to reduce its length.

 \subsection{An application to 3D vector fields}
  The present theory applies to a topological characterization for 3D flows. As a proof of concept, we introduce an example appearing in the joint work with Nippon
  Pneumatic MFG, Co., LTD. We are developing powder classifying devices by applying the present theory to the projection of a three-dimensional vector 
  field onto two-dimensional sections.   Figure~\ref{fig:NPK2}(a) is a 3D computational domain (the left panel) for the supersonic jet mill, PJM S-type~\cite{NPK},  which corresponds to the mill part of the device enclosed by the gray square of a side view in the middle panel. The mill is equipped with six nozzles from which compressed air is injected. Being stirred by compressed air, powders are classified by their size, and fine particles are absorbed in the suction port in the center. 
  See the top view of the right panel of Figure~\ref{fig:NPK2}(a). With 3D simulations of the Navier-Stokes flow in the computational domain, we project 
  an instantaneous vector field onto three planar sections, shown as dotted lines (b)--(d) in the left panel. We then construct orbit structures on the sections as shown in the left panels of Figure~\ref{fig:NPK2}(b)--(d).
 
The middle panel of Figure~\ref{fig:NPK2}(b) shows an ss-saddle connection diagram for the orbit structure in the left panel on the upper side section of the computational domain.
 Since the flow domain is bounded by a solid boundary along which the flow  direction is counter-clockwise, the root structure is $\beta_{\emptyset -}$. No class-$c_\pm$ structures
  are attached to the  boundary, the flow domain is filled with counter-clockwise periodic orbits, and its inner boundary is a cycle. This gives rise to the COT representation
 of the root structure, $\beta_{\emptyset -} ( b_+(p_{\widetilde{+}}(\B^1_{b\widetilde{+}})), \{ \lambda_{-} \})$.
The flow domain inside the cycle is a repelling transverse annulus, that is to say $\B^1_{b\widetilde{+}} = b_{\widetilde{+}}(\B^2_{\widetilde{+}}, \{ \B^2_{a\widetilde{+}s} \})$, which 
contains a sink without rotation, i.e., $\B^2_{\widetilde{+}} = \sigma_{\widetilde{+}0}$, and two slidable saddles enclosing counter-clockwise sinks
 $\B^2_{a\widetilde{+}s} = q_{\widetilde{+}} ( \sigma_{\widetilde{-}+}) \cdot q_{\widetilde{+}} (\sigma_{\widetilde{-}+})$. Hence, its COT representation becomes
 \[ 
\beta_{\emptyset -} ( b_+(p_{\widetilde{+}}(b_{\widetilde{+}}(\sigma_{\widetilde{+}0}, \{ q_{\widetilde{+}} (\sigma_{\widetilde{-}+}) \cdot q_{\widetilde{+}} (\sigma_{\widetilde{-}+}) \}))),  \{ \lambda_{-} \} ). 
\]
 Since the orbit structure contains one limit circuit, we divide the flow domain into two disjoint domains. By the metric completion, the outer domain is a periodic annulus 
 and the inner domain is equivalent to a spherical surface. By the boundary collapse, we have the linking structure represented as a surface graph 
 in the right panel of Figure~\ref{fig:NPK2}(b).

 Figure~\ref{fig:NPK2}(c) shows the orbit structure (the left panel) and its ss-saddle connection diagram (the middle panel).
  Since the compressed air is injected from the six nozzles in this section,  the flow structures are approximated by slidable $\partial$-saddles.
  The flow domain consists of an annular domain and a disk domain separated by the suction port in the center. Since the outer boundary of
  the annular domain has the six slidable $\partial$-saddles,  its root structure is represented by 
$\beta_{\emptyset 2} (\{  \B_{c+s}^1, \B_{\widetilde{-}}^1, \B_{c-s}^1, \B_{\widetilde{+}}^1,\B_{\gamma + s}^1 \}, \B_{as}^1 )$.
 The source and sink structures of the slidable saddles are a source without rotation, $\sigma_{\widetilde{+}0}$, and a solid boundary associated with
 a counter-clockwise flow $\beta_+\{ \lambda_+\}$. Picking up one of the slidable saddles as a special source/sink pair for the root structure, we regard the remaining five slidable saddles as class-$\gamma_+$ orbit structures. Accordingly,  $\B^1_{\widetilde{+}}=\sigma_{\widetilde{+}0}$, $\B^1_{\widetilde{-}}=\beta_-\{ \lambda_- \}$ and
$\B^1_{\gamma + s} = \gamma_{\widetilde{+} +}(\sigma_{\widetilde{+}0})\cdot \cdots \cdot \gamma_{\widetilde{+} +}(\sigma_{\widetilde{+}0}) := \gamma^{\cdot 5}_{\widetilde{+}+}(\sigma_{\widetilde{+}0})$.
Moreover, there are no class-$c_\pm$ orbit structures attached to the outer boundary and no class-$a$ orbits between the source structure $\B^1_{\widetilde{+}}$ 
and the sink structure $\B^1_{\widetilde{-}}$. Hence, we have
\[
 \beta_{\emptyset 2}(\{\lambda_\sim, \beta_-\{ \lambda_- \},  \lambda_\sim,  \sigma_{\widetilde{+}0}, \gamma^{\cdot 5}_{\widetilde{+}+}(\sigma_{\widetilde{+}0}) \}, \lambda_\sim).
  \]
 In the inner disk domain enclosed by the solid boundary, the root structure is $\beta_{\emptyset -}$. Since the flow domain is a repelling transverse annulus
  and the solid boundary has no class-$c_\pm$ orbit structures, its COT representation
 starts with $\beta_{\emptyset -} ( b_{\widetilde{+}}(\B^1_{\widetilde{+}}, \{ \B^1_{a\widetilde{+}s} \}),  \{ \lambda_{-} \} )$. The inner orbit structures are a source without rotation, 
and three self-connected saddle separatrices enclosing counter-clockwise periodic orbits and centers,  namely $\B^1_{\widetilde{+}} = \sigma_{\widetilde{+}0}$ and 
$\B^1_{a\widetilde{+}s} = a_+(b_+(\sigma_+)) \cdot a_+(b_+(\sigma_+)) \cdot a_+(b_+(\sigma_+)) := a_+(b_+(\sigma_+))^{ \cdot 3}$, which yields
\[
\beta_{\emptyset -} ( b_{\widetilde{+}}(\sigma_{\widetilde{+}0}, \{ a_+(b_+(\sigma_+))^{ \cdot 3} \}), \{ \lambda_{-}\}).
 \]
The linking structures on a disk and a sphere are obtained by the metric completion and the boundary collapse. See the right panel of Figure~\ref{fig:NPK2}(c).

The flow domain of Figure~\ref{fig:NPK2}(d) also consists of two disjoint domains separated by the suction port.  The orbit structure and the ss-saddle connection diagram are shown in the left and the middle panels respectively. The outer annular domain enclosed by the solid boundaries is filled with
 counter-clockwise periodic orbits. Hence, the COT representation is $\beta_{\emptyset -}(b_{+}(\beta_{+}\{ \lambda_+ \}))$. The inner flow domain is a disk domain enclosed by
the solid boundary without source/sink structures and class-$c_\pm$ orbit structures. Since the domain is filled with non-closed orbits, the root structure becomes
$\beta_{\emptyset -} ( b_{\widetilde{+}}(\B^1_{\widetilde{+}}, \{ \B^1_{a\widetilde{+}s} \}),  \{ \lambda_{-} \})$.
As inner orbit structures, there exist a source without rotation, a self-connected saddle separatrix and a slidable saddle, namely
 $\B^1_{\widetilde{+}} = \sigma_{\widetilde{+}0}$, $\B^1_{a\widetilde{+}s} = a_+(\B^2_{b+}) \cdot a_{\widetilde{+}}\{ \B^2_{\widetilde{+}},  \B^3_{\widetilde{+}}\}$, which yields
\[ 
\beta_{\emptyset -} ( b_{\widetilde{+}}(\sigma_{\widetilde{+}0}, \{ a_+(\B^2_{b+}) \cdot a_{\widetilde{+}}\{ \B^2_{\widetilde{+}},  \B^3_{\widetilde{+}}\} \}), \{ \lambda_{-}  \}).
\]
The self-connected saddle separatrix encloses counter-clockwise periodic annulus around a center, $\B^2_{b+} = b_+(\sigma_+)$, and the slidable saddle exists between
a counter clockwise source, $\B^2_{\widetilde{+}} = \sigma_{\widetilde{+}+}$ and the source without rotation, $\B^3_{\widetilde{+}} = \sigma_{\widetilde{+}0}$. Hence,
the COT representation finally becomes
\[
 \beta_{\emptyset -} ( b_{\widetilde{+}}(\sigma_{\widetilde{+}0}, \{ a_+(b_+(\sigma_+)) \cdot a_{\widetilde{+}}\{ \sigma_{\widetilde{+}+}, \sigma_{\widetilde{+}0} \} \}),  \{\lambda_{-}\} ).
 \]
Their linking structures are shown in the right panel of Figure~\ref{fig:NPK2}(d).

\begin{figure}
\begin{center}
\includegraphics[scale=0.55]{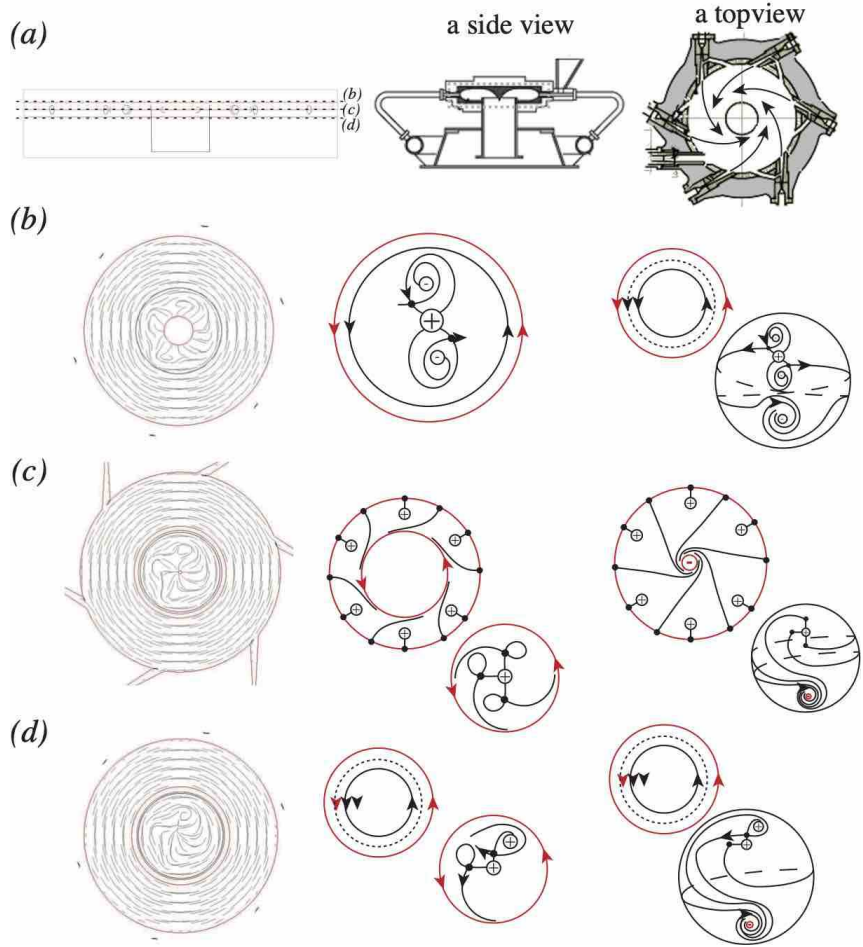}
\end{center}
\caption{(a) Left: a three-dimensional computational flow domain for the supersonic jet mill PJM S-Type, Nippon Pneumatic MFG. Co., LTD. Middle: a side view. The computational domain to the right corresponds to the part of the device enclosed by the gray rectangle. Right: a top view. The pictures are reprinted from the webpage by courtesy of the company.
In the left panel of each figure (b-d), we show orbit structures constructed by projecting three-dimensional vector fields on the sections sliced
  at (b) the upper part, (c) the middle part with six nozzles, and (d) the lower part of the computational domain.  Their corresponding ss-saddle connection diagrams and linking structures are shown in the middle and right panels, respectively.}
\label{fig:NPK2}
\end{figure}

\subsection{Application to interdisciplinary research}
The COT representations of the flows of finite type are used not only to distinguish the topological structures of flow patterns mathematically. Since the COT representation of the flow pattern is always associated with a decomposition of the flow domain separated by border orbits,  we can quantify the geometric information on the flow domains such as areas and geometric centers by using them.

 In what follows, to show the usefulness of our method as a topological data analysis, we introduce the application to practical research and development of a new powder classification device 
 conducted with Nippon Pneumatic MFG Co.Ltd, and we explain how the COT representations were used. The goal of the development is to optimize the shape of the device in which high-speed airflow classifies 
 powders accurately and efficiently by particle size.  In this optimization problem, we use the COT representations in the following way:  We obtain a two-dimensional section of the three-dimensional incompressible
  flow in the device that is computed numerically. Figure~\ref{fig:NPK3}(a) is a visualization of the numerical simulation of the flow pattern, from which we extract border orbits as in Figure~\ref{fig:NPK3}(b). In this figure, since the flow domain
  has an outflow opening to the left of the flow domain and an inflow small opening at the right-bottom, we add  $\partial$-saddles corresponding to the outflow and the inflow orbit structures, and we close the boundary of the 
  flow domain as shown in Figure~\ref{fig:NPK3}(c). This gives rise to the ss-saddle connection diagram of the flow of finite type to which we provide the following COT representation.

\[
\beta_{\emptyset 2}(\{a_{\widetilde{+}}\{\sigma_{\widetilde{+}-},\sigma^1_{\widetilde{+}+}\}, a_{\widetilde{-}}\{\sigma^2_{\widetilde{-}+},a_{\widetilde{-}}\{\sigma^3_{\widetilde{-}+},\{a_{\widetilde{-}}\{\sigma_{\widetilde{-}-},\sigma_{\widetilde{-}0}\}\}\}, \gamma_{\widetilde{+}+}(a_{\widetilde{+}}\{\sigma^4_{\widetilde{+}-},\sigma_{\widetilde{+}0}\})\}).
\]

Note that the COT representation and the linking structure in Figure~\ref{fig:NPK3}(d) are uniquely assigned to this flow pattern.
The flow domain is then decomposed into several subdomains separated by the border orbits, and the COT representation is available as the symbolic identifier of these border orbits. 
In particular, a downflow region is expressed by the gray region in Figure~\ref{fig:NPK3}(d) that is surrounded by the ss-components connecting the slidable saddles with the sink/source structures, 
$\sigma^1_{\widetilde{+}-}$, $\sigma^2_{\widetilde{-}+}$, $\sigma^3_{\widetilde{-}+}$ and $\sigma^4_{\widetilde{+}-}$.  It is known that the position and the area of this downflow region play an important role in 
the efficiency of the device. Hence, we compute the area of the gray downflow region that is extracted based on the COT representation, and we then modify the shape of the device so that the downflow region gets 
larger. Repeating this process, we found an optimal shape of the device in silico, based on which we produced a prototype device, and we finally brought a new product of Figure~\ref{fig:NPK3}(f) 
to the market. See the catalog of the product~\cite{NPK2} and the press release~\cite{JST} for the detail.  In this application, quantifying the downflow region in terms of the COT representations was the key to the successful development of the device, which was 
brought by the proposed topological data analysis.

\begin{figure}
\begin{center}
\includegraphics[scale=0.35]{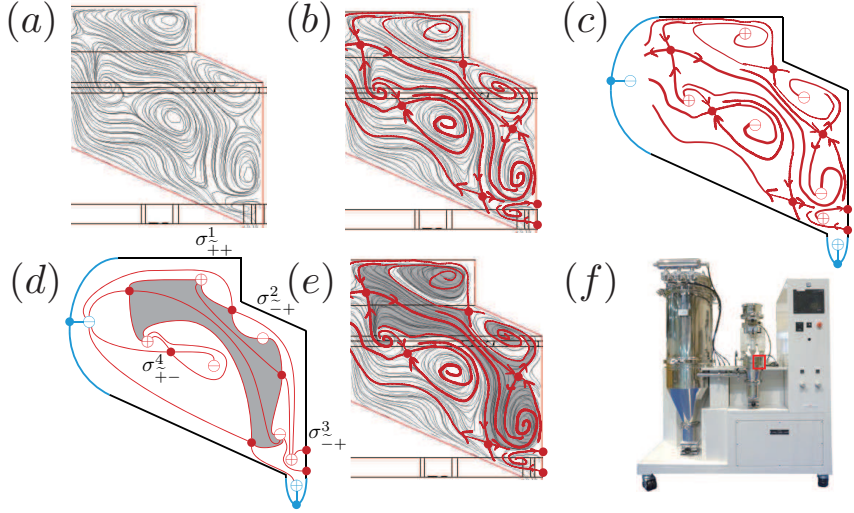}
\end{center}
\caption{
(a) A flow pattern on a 2D section of a three-dimensional incompressible flow. (b) Border orbits extracted from the flow pattern. (c) The border orbits compensated by $\partial$-saddles at the boundary
 in the flow domain. (d)  The ss-saddle connection diagram and the linking structure of this flow pattern. (e) The gray region represents a downflow region extracted from the COT representation. (f) ULTRA FINE SEPARATOR:Cnine, Nippon Pneumatic MFG. Co., LTD has been developed through the topological data analysis proposed in this paper.}
\label{fig:NPK3}
\end{figure}

\section{{Summary}}
We have shown that the topological structure of particle orbits generated by any flow of finite type corresponds uniquely to the pair of a partially cyclically ordered rooted tree (COT) and a set of directed surface graphs representing the linking structure. The discrete representations are utilized as simple symbolic identifiers for the topological structure of complex flows. Since the flow of finite type contains structurally stable 2D Hamiltonian flows, the present theory is an extension of the existing results on the incompressible vector fields~\cite{SY18,YS12,UYS18}.  Owing to this extension, one can extract discrete structures represented by trees/graphs and symbolic COT expressions from compressible 2D vector fields. From a theoretical point of view, it is important to note that the flow of finite type contains Morse--Smale vector fields, which are dense in the set of surface flows. The classification of streamline topologies for Morse--Smale flows has been considered in \cite{KMP,OS,Peixoto} to which our theory would make contributions in terms of discrete combinatorics. In addition,  we can apply the theory to the projection or the restriction of 3D vector fields on 2D sections. The topological classification of 3D vector fields is generally difficult since simple mathematical classification theories such as Poincar\'e--Bendixson theorem for 2D vector fields are not available. However, the projection method possibly enables us to classify and identify the topological structures of 3D vector fields as layered 2D vector fields as demonstrated in Section~5.2.  

The COT representations of the flow of finite type are more than just symbolic identifiers of the topological structures of flow patterns. They apply to the industrial problem as we have observed
 in Section~5.3. In particular, when we would apply the topological data analysis to a big dataset of flow patterns such as snapshots obtained by a long time evolution of fluid flows, we could 
 understand the dynamics of the flow pattern in terms of topology.  The proposed method of topological data analysis is called the topological flow data analysis (TFDA). 
 
 \section{{Future works}}
We discuss future works of the present study from three aspects: theoretical studies, algorithm development, and applications to real problems.
\paragraph{Theoretical studies:} 
{ We have several directions to extend the present topological classification of orbit structures. One direction is adding degenerate singular orbits to the flow structure since
 they can express inlets and outlets appearing in various flows such as cardiovascular flows, environmental flows, and industrial equipment. Multi-saddle connections from such degenerate singular orbits
  for Hamiltonian vector fields on closed surfaces are studied from the viewpoint of integrable systems~\cite{BoFo04}. Based on this, we could extend the present classification theory of the flow of finite type to that
  with degenerate singular orbits.  The other direction is to consider the topological classification of orbit structures in domains satisfying doubly periodic boundary conditions. This extension is useful since a doubly periodic
  boundary condition is usually assumed in the numerical studies of complex flows such as 2D isotropic turbulence. 
 
In the meantime, we can develop a new method of data analysis by combining the topological flow data analysis with data science techniques. For instance, using  COT representations as 
annotations for flow pattern images, we can realize an efficient flow pattern recognition with deep learning. We can also conduct the clustering analysis of a long-time evolution of fluid patterns in terms of COT 
representations, thereby revealing common geometric properties in the same cluster.  In addition, by tracking the transitions of the flow patterns among these clusters, we could obtain a 
data-driven Markov model elucidating the hidden dynamics of flow in the dataset.

\paragraph{Algorithm developments:} As described in the introduction, computer software converting given datasets into discrete 
 tree expressions associated with COT representations has been developed for structurally stable Hamiltonian vector fields. It plays a vital role in extending research fields to which the topological flow data analysis is applied. In a similar manner, it is desired to develop an efficient algorithm converting 
 a given flow of finite type into a COT representation and a linking structure. For the case of Hamiltonian flows, we have utilized persistent homology and Reeb graphs 
 to identify border orbits from the Hamiltonian in the algorithm. On the other hand, we cannot use the same idea for the flows of finite type since the Hamiltonian function no longer exists. Hence we need to develop an algorithm to extract border orbits directly from vector fields of the flow of finite type and implement it as computer software. 
  }

\paragraph{Applications to real problems:} It is important to expand the applicability of the TFDA to many research fields. Currently, with the support of Japan Science Technology (JST), 
we are promoting a research project to expand the application of TFDA~\cite{JST2}. In particular, we target their application to cardiovascular flows. Using the TFDA, we construct a theory to assign a COT representation to a 2D cross-sectional flow pattern of blood flow in the left ventricle that is obtained by cardiac echo and MRI. In this project, we will propose a new method for diagnosing cardiovascular disease in cooperation with doctors and hospitals. In addition, we will conduct the TFDA for data in a wide range of fields such as the environment, materials, life sciences, and industry to promote this 
analysis technology. The results of this research project will be published in the future.

%

\end{document}